\def\@tocline#1#2#3#4#5#6#7{\relax
  \ifnum #1>\c@tocdepth 
  \else
    \par \addpenalty\@secpenalty\addvspace{#2}%
    \begingroup \hyphenpenalty\@M
    \@ifempty{#4}{%
      \@tempdima\csname r@tocindent\number#1\endcsname\relax
    }{%
      \@tempdima#4\relax
    }%
    \parindent\z@ \leftskip#3\relax \advance\leftskip\@tempdima\relax
    \rightskip\@pnumwidth plus4em \parfillskip-\@pnumwidth
    #5\leavevmode\hskip-\@tempdima
      \ifcase #1
       \or\or \hskip 1em \or \hskip 2em \else \hskip 3em \fi%
      #6\nobreak\relax
    \dotfill\hbox to\@pnumwidth{\@tocpagenum{#7}}\par
    \nobreak
    \endgroup
  \fi}
\newlength{\querylen}
\newcommand{\mmp}{\mathbb{P}}
\newcommand{\me}{\mathbb{E}}
\newcommand{\E}{\mathbb{E}}
\newcommand{\mr}{\mathbb{R}}
\newcommand{\mn}{\mathbb{N}}
\DeclareMathOperator{\1}{\mathbbm{1}}
\newtheorem{thm}{Theorem}[section]
\newtheorem{lemma}[thm]{Lemma}
\newtheorem{cor}[thm]{Corollary}
\newtheorem{assertion}[thm]{Proposition}
\theoremstyle{definition}
\theoremstyle{remark}
\newtheorem{rem}[thm]{Remark}
\begin{document}
\title[Renewal theory for iterated perturbed random walks]{Renewal theory for iterated perturbed random walks on a general branching process tree: intermediate generations}

\author{Vladyslav Bohun}
\address{Faculty of Computer Science and Cybernetics, Taras Shevchenko National University of Kyiv, Kyiv, Ukraine}
\email{vladyslavbogun@gmail.com}

\author{Alexander Iksanov}
\email{iksan@univ.kiev.ua}

\author{Alexander Marynych}
\email{marynych@unicyb.kiev.ua}

\author{Bohdan Rashytov}
\email{mr.rashytov@gmail.com}

\begin{abstract}
Let $(\xi_k,\eta_k)_{k\in\mn}$ be independent identically distributed random vectors with arbitrarily dependent positive components. We call a (globally) perturbed random walk a random sequence $(T_k)_{k\in\mn}$ defined by $T_k:=\xi_1+\cdots+\xi_{k-1}+\eta_k$ for $k\in\mn$. Further, by an iterated perturbed random walk is meant the sequence of point processes defining the birth times of individuals in subsequent generations of a general branching process provided that the birth times of the first generation individuals are given by a perturbed random walk. For $j\in\mn$ and $t\geq 0$, denote by $N_j(t)$ the number of the $j$th generation individuals with birth times $\leq t$. In this article we prove counterparts of the classical renewal-theoretic results (the elementary renewal theorem, Blackwell's theorem and the key renewal theorem) for $N_j(t)$ under the assumption that $j=j(t)\to\infty$ and $j(t)=o(t^{2/3})$ as $t\to\infty$. According to our terminology, such generations form a subset of the set of intermediate generations.
\end{abstract}

\keywords{Convolution, general branching process, key renewal theorem, perturbed random walk, renewal theory} 

\subjclass[2010]{Primary: 60K05, 60J80; secondary: 60G05}

\maketitle

\tableofcontents

\section{Introduction}\label{Sect1}

The classical renewal theory is an area of applied probability dealing with nondecreasing standard random walks and various derived processes like {\it renewal process, first passage time, overshoot, undershoot} etc. A good overview of the renewal theory can be found in \cite{Asmussen:2003}, \cite{Resnick:2002} and more recent accounts \cite{Iksanov:2016} and \cite{Mitov+Omey:2014}.

Assume that a general branching process (a.k.a. Crump--Mode--Jagers branching process) is generated by a standard random walk $S^{(1)}$ with nonnegative steps. Clearly, the random sequence $S^{(j)}$ defined by the birth times in the $j$th generation of the process ($j\geq 2$) is much more complicated than the standard random walk $S^{(1)}$ defining the birth times in the $1$st generation. It is natural to call $(S^{(j)})_{j\geq 2}$ {\it iterated standard random walk on a general branching process tree}. This should not be confused with iterated renewal processes treated in \cite{Sen:1981}. In this paper we initiate a systematic study of $S^{(j)}$ for $j\geq 2$ and its derived processes, our primary purpose being obtaining counterparts of the classical renewal-theoretic results. Actually, our setting will be a bit more general than that outlined above. We shall develop elements of renewal theory for iterated {\it perturbed} random walks rather than standard random walks, thereby making our results more general.

Now it is time to set the scene. Let $(\xi_i, \eta_i)_{i\in\mn}$ be independent copies of a $\mr^2$-valued random vector $(\xi, \eta)$ with arbitrarily dependent components. Denote by $(S_i)_{i\geq 0}$ the zero-delayed standard random walk with increments $\xi_i$ for $i\in\mn$, that is, $S_0:=0$ and $S_i:=\xi_1+\cdots+\xi_i$ for $i\in\mn$. Define
\begin{equation*}
T_i:=S_{i-1}+\eta_i,\quad i\in \mn.
\end{equation*}
The sequence $T:=(T_i)_{i\in\mn}$ is called {\it perturbed random walk} (PRW, in short). A survey of various results for the so defined PRWs can be found in the book \cite{Iksanov:2016}. An incomplete list of more recent papers addressing various aspects of the PRWs includes \cite{Alsmeyer+Iksanov+Marynych:2017, Duchamps+Pitman+Tang:2019, Iksanov+Pilipenko+Samoilenko:2017, Pitman+Tang:2019, Pitman+Yakubovich:2019, Rashytov:2018}.

In what follows we assume that $\xi$ and $\eta$ are almost surely (a.s.) positive. Put $N(t):=\sum_{i\geq 1}\1_{\{T_i\leq t\}}$ and $V(t):=\me N(t)$ for $t\geq 0$.  It is clear that
\begin{equation}\label{equ}
V(t)=\me U((t-\eta)_{+})=(U\ast G)(t)=\int_{[0,\,t]}U(t-y){\rm d}G(y),\quad t\geq 0,
\end{equation}
where, for $t\geq 0$, $U(t):=\sum_{i\geq 0}\mmp\{S_i\leq t\}$ is the renewal function and $G(t)=\mmp\{\eta\leq t\}$. As usual, $x_{+}:=\max(x,0)$. Here and in what follows we denote by $u \ast v$ the Lebesgue--Stieltjes convolution of two functions $u,v$ of locally bounded variation. We also use the notation $u^{\ast(j)}$, $j\in\mn$, for the $j$th convolution power of $u$.

Now we provide more details about the construction of a general branching process (already mentioned at the beginning of the section) in the special case it is generated by $T$. At time $0$ there is one individual, the ancestor. The ancestor produces offspring (the first generation) with birth times given by the points of $T$. The first generation
produces the second generation. The shifts of birth times of the second generation
individuals with respect to their mothers' birth times are distributed according to copies of $T$, and for different mothers these copies are independent. The second generation produces the
third one, and so on. All individuals act independently of each other.

For $t\geq 0$ and $j\in\mn$, denote by $N_j(t)$ the number of the $j$th generation individuals with birth times $\leq t$ and put $V_j(t):=\me N_j(t)$, and $V(t):=0$ for $t<0$. Then $N_1(t)=N(t)$, $V_1(t)=V(t)$ and
$$
V_j(t)=(V_{j-1}\ast V)(t)=\int_{[0,\,t]} V_{j-1}(t-y){\rm d}V(y),\quad j\geq 2,\quad t\geq 0.
$$
The basic decomposition that sheds light on the properties of $N_j:=(N_j(t))_{t\geq 0}$ 
and also demonstrates its recursive structure is
\begin{equation}\label{basic1232}
N_j(t)=\sum_{r\geq 1}N^{(r)}_{j-1}(t-T_r)\1_{\{T_r\leq t\}}=\sum_{k\geq 1}N^{(k)}_1(t-T^{(j-1)}_k)\1_{\{T^{(j-1)}_k\leq t\}} ,\quad j\geq 2,\quad t\geq 0,
\end{equation}
where $N_{j-1}^{(r)}(t)$ is the number of successors in the $j$th generation with birth times within $[T_r,t+T_r]$ of the first generation individual with birth time $T_r$; $T^{(j-1)}:=(T^{(j-1)}_k)_{k\geq 1}$ is some enumeration of the birth times in the $(j-1)$th generation; $N_1^{(k)}(t)$ is the number of children in the $j$th generation with birth times within $[T^{(j-1)}_k,t+T^{(j-1)}_k]$ of the $(j-1)$th generation individual with birth time $T^{(j-1)}_k$. By the branching property, $(N_{j-1}^{(1)}(t))_{t\geq 0}$, $(N_{j-1}^{(2)}(t))_{t\geq 0},\ldots$ are independent copies of $N_{j-1}$ which are also independent of $T$, and $(N_1^{(1)}(t))_{t\geq 0}$, $(N_1^{(2)}(t))_{t\geq 0},\ldots$ are independent copies of $(N(t))_{t\geq 0}$ which are also independent of $T^{(j-1)}$.  Note that, for $j\geq 2$, $N_j$ is a particular instance of a random process with immigration at random times (the term was introduced in \cite{Dong+Iksanov:2020}, see also \cite{Iksanov+Rashytov:2020}).

Our motivation behind introducing the iterated perturbed random walks is at least three-fold.

\begin{enumerate}[leftmargin=*]
\item [1)] For each integer $j\geq 2$, the sequence $T^{(j)}$ and the process $N_j$ are a natural generalization of the perturbed random walk $T$ and the counting process $(N(t))_{t\geq 0}$. It is interesting to investigate to which extent the renewal-theoretic properties of $T$ and $(N(t))$ are inherited by $T^{(j)}$ and $N_j$.
Thus, the activity undertaken in the present article can be thought of as the development of renewal theory for the iterated perturbed random walks.

\item[2)] The sequence $(T^{(j)})_{j\in\mn}$ is a particular instance of a branching random walk in which the first generation point process is $(N(t))_{t\geq 0}$, the counting process of a perturbed random walk. Alternatively, and this is our preferable viewpoint, for $j\in\mn$, $T^{(j)}$ can be interpreted as the sequence of birth times in the $j$th generation of a general branching process. 
Therefore, the results of the present article contribute towards better understanding of how the births occur within a particular generation.  Being of intrinsic interest for the theory of general branching processes, this information also sheds light on the organization of levels (the sets of vertices located at the same distance from the root) of some random trees (for instance, random recursive trees and binary search trees) that can be constructed as family trees of general branching processes stopped at suitable random times. We refer to \cite{Holmgren+Janson:2017} for more details and examples of the embeddable random trees.

\item[3)] Renewal theory for perturbed random walks is an inevitable ingredient for investigation of nested occupancy scheme in random environment generated by stick-breaking. Referring to  \cite{Buraczewski+Dovgay+Iksanov:2020, Iksanov+Marynych+Samoilenko:2020} for more details we only mention that the latter scheme is a generalization of the classical Karlin infinite balls-in-boxes occupancy scheme \cite{Gnedin+Hansen+Pitman:2007, Karlin:1967}. Unlike the Karlin scheme in which the collection of boxes is unique, there is a nested hierarchy of boxes, and the hitting probabilities of boxes are defined in terms of iterated stick-breaking. Assuming that $n$ balls have been thrown, denote by $K_n(j)$ the number of occupied boxes in the $j$th level which is the basic object of interest. It turns out that whenever $j=j_n=o((\log n)^{1/2})$ (the case of fixed $j$ is included) the distributional behavior of $K_n(j)$ as $n\to\infty$ is the same as that of $N_j(\log n)$, when the underlying perturbed random walk $T$ is appropriately chosen.

\end{enumerate}

We call the $j$th generation {\it early}, {\it intermediate} or {\it late} depending on whether $j$ is fixed, $j=j(t)\to\infty$ and $j(t)=o(t)$ as $t\to\infty$, or $j=j(t)$ is of order $t$. In view of Proposition \ref{bigg} given in Section \ref{sec:height} there are no other generations. Assume, for the time being, that $j$ is a late generation and that $T$ is a collection of random points, not necessarily the perturbed random walk. Nevertheless, we retain the notation $N_j$ and $V_j$. In this case the asymptotic behavior of $V_j$ and $N_j$ is well-understood. For instance, a delicate counterpart of the key renewal theorem for $V_j$ which includes both a version of the elementary renewal theorem and a version of Blackwell's theorem can be found in Theorem A of \cite{Biggins:1979}. For the corresponding a.s.\ result for $N_j$, see Theorem B of the same paper and Theorem 4 in \cite{Biggins:1992}. A strong law of large numbers for $N_j(bj)$ for appropriate $b>0$ is given in formula (1.1) of \cite{Biggins:1979}. From these and the other results of this flavor it follows that $N_j$ forgets what was happening in the early history and particularly in the $1$st generation. The behavior of $N_j$ is universal for a wide class of input processes (responsible for the $1$st generation). It is driven by limit theorems available for general branching processes like convergence of the Biggins martingales, large deviations etc.

While the present paper deals with some intermediate generations, the early generations which admit a much simpler analysis will be treated in a separate paper \cite{Iksanov etal:2021}. One may expect that the behavior of the iterated perturbed random walks in the early and intermediate generations is very different from that in the late generations. When $j$ is a non-late generation, the process $N_j$ should inherit, for the most part, the properties of $N$, possibly in a modified form. This statement is confirmed by counterparts of the elementary renewal theorem (Theorems \ref{thm:elem1} and \ref{thm:elem2}), the key renewal theorem (Theorem \ref{thm:keyren}) and Blackwell's theorem (Corollary \ref{black}) which are our main results.

The remainder of the paper is structured as follows. Our main findings are formulated in Section \ref{results} and then proved in Section \ref{proofs}. Also, Section \ref{results} contains two previously known results concerning $N_j$ and $V_j$. To our knowledge, all the results presented in this paper form the state-of-the-art as far as the intermediate generations of the iterated perturbed random walks are concerned. Finally, the appendix collects a rate of convergence result and counterparts of Blackwell's theorem and the key renewal theorem for the perturbed random walks.

\section{Results}\label{results}

\subsection{The height of a confined general branching process tree}\label{sec:height}

For $t>0$, put $$H(t):=\inf\{j\in\mn: N_j(t)=0\}$$ and note that $N_j(t)=0$ a.s.\ for all $j\geq H(t)$. We call the variable $H(t)$ the {\it height of a general branching process tree} generated by a perturbed random walk $T$ and confined to the strip $[0,t]$. The result given below is of principal importance for our classification of generations (early, intermediate, late).
\begin{assertion}\label{bigg}
For each $t\geq 0$, $H(t)<\infty$ a.s. Furthermore,
\begin{equation}\label{height}
\lim_{t\to\infty}\frac{H(t)}{t}=\frac{1}{\gamma}\in (0,\infty)\quad\text{{\rm a.s.}},
\end{equation}
where $\gamma:=\sup\{z>0: \mu(z)<1\}$ and $\mu(z):=\inf_{s>0}(e^{zs}\frac{\me e^{-s\eta}}{1-\me e^{-s\xi}})$ for $z>0$.
\end{assertion}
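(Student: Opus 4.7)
The plan is to recast the problem in terms of the leftmost particle in generation $j$ of the branching random walk whose first-generation point process is $T$. Setting $M_j := \inf_{k\geq 1} T^{(j)}_k$, the events $\{H(t)>j\}$ and $\{M_j\leq t\}$ coincide, so $H(t)<\infty$ a.s.\ is equivalent to $M_j\to\infty$ a.s., and \eqref{height} is equivalent to $M_j/j\to\gamma$ a.s., which then yields $H(t)/t\to 1/\gamma$ by inversion of the (a.s.\ increasing) sequence $(M_j)$.

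The next step is to compute the intensity Laplace transform of the first generation. Since $S_{k-1}$ depends only on $\xi_1,\dots,\xi_{k-1}$ and is therefore independent of $\eta_k$, a geometric sum yields
\[
\phi(s) := \me \sum_{k\geq 1} e^{-s T_k} = \frac{\me e^{-s\eta}}{1-\me e^{-s\xi}},\qquad s>0,
\]
finite for every positive $s$. By the branching property and induction, $\me \sum_k e^{-s T^{(j)}_k} = \phi(s)^j$, and combined with Markov's inequality this produces the many-to-one bound
\[
\mmp(M_j\leq t)\,\leq\,V_j(t)\,\leq\,e^{st}\phi(s)^j,\qquad s>0;
\]
optimizing in $s$ gives $\mmp(M_j\leq jz)\leq \mu(z)^j$. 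Two conclusions follow quickly. First, because $\phi(s)\to 0$ as $s\to\infty$, for any fixed $t$ one can choose $s$ with $e^{st}\phi(s)<1$; then $\sum_j V_j(t)<\infty$ and Borel--Cantelli delivers $H(t)<\infty$ a.s. Second, for every $z\in(0,\gamma)$ the bound $\mu(z)^j$ is summable, so $\liminf_j M_j/j\geq z$ a.s.; sending $z\uparrow\gamma$ gives $\liminf_j M_j/j\geq\gamma$ a.s. Positivity $\gamma>0$ follows from $\mu(0^+)=\inf_s\phi(s)=0$, and $\gamma<\infty$ from the exponential decay of $\phi(s)$ at rate at least the essential infimum of $\eta$ as $s\to\infty$, which eventually forces $e^{sz}\phi(s)\geq 1$ uniformly in $s$.

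The main obstacle is the matching upper bound $\limsup_j M_j/j\leq\gamma$ a.s., which is precisely the Hammersley--Kingman--Biggins-type theorem on the speed of the leftmost particle of a branching random walk; the hypothesis that $\phi(s)<\infty$ for all $s>0$ amply covers the finite-exponential-moment requirement of that result. Concretely, for $z>\gamma$ one picks the minimizer $s^\ast$ of $e^{sz}\phi(s)$ and works with the additive martingale $W_j(s^\ast)=\phi(s^\ast)^{-j}\sum_k e^{-s^\ast T^{(j)}_k}$; a truncated second-moment argument along a spine, combined with the branching decomposition \eqref{basic1232}, produces particles with $T^{(j)}_k\leq jz$ for infinitely many $j$, hence $\limsup_j M_j/j\leq z$, and letting $z\downarrow\gamma$ closes the argument. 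Combining both bounds gives $M_j/j\to\gamma$ a.s., and inversion then delivers $H(t)/t\to 1/\gamma$ a.s.
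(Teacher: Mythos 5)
Your proposal is correct and, at its core, follows the same route as the paper: both reduce the statement to the a.s.\ behaviour of the minimal birth time in generation $j$ (your $M_j$ is the paper's $B(j)$), use the identity $\{H(t)>j\}=\{M_j\leq t\}$, and rest the hard direction on Biggins' Chernoff-type theorem for the leftmost particle of a branching random walk. The difference is one of economy: the paper simply cites the Corollary on p.~635 of Biggins (1977) for the full two-sided law $B(n)/n\to\gamma$ a.s.\ and deduces both $H(t)<\infty$ and \eqref{height} from it, whereas you rederive the easy half from scratch --- the many-to-one bound $\mmp\{M_j\leq jz\}\leq V_j(jz)\leq\mu(z)^j$ plus Borel--Cantelli gives $\liminf_j M_j/j\geq\gamma$ and the finiteness of $H(t)$ without any deep input --- and invoke the Hammersley--Kingman--Biggins theorem (with a sketch of the truncated second-moment/spine argument) only for $\limsup_j M_j/j\leq\gamma$. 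Your version is more self-contained and makes transparent exactly where the first-moment computation $\me\sum_k e^{-sT_k}=\phi(s)$ (which the paper never writes down) enters; the paper's version is shorter. One small slip: for the finiteness of $H(t)$ you claim one can choose $s$ with $e^{st}\phi(s)<1$, but for $t$ exceeding the essential infimum of $\eta$ no such $s$ exists, since $e^{st}\me e^{-s\eta}\geq e^{s\delta}\mmp\{\eta<t-\delta\}\to\infty$ as $s\to\infty$. What you actually need (and what your own bound delivers) is merely $\phi(s)<1$ for some $s$, which holds for large $s$ because $\phi(s)\to 0$; then $\sum_j V_j(t)\leq e^{st}\sum_j\phi(s)^j<\infty$ and Borel--Cantelli applies as you say. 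With that one-line repair the argument is complete modulo the cited branching random walk theorem, which is exactly the external input the paper also relies on.
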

\begin{proof}
By assumption, $\mmp\{\eta=0\}=0$. This entails $\lim_{s\to\infty}\frac{\me e^{-s\eta}}{1-\me e^{-s\xi}}=0$ and thereupon
$$
\lim_{z\to 0+}\mu(z)=0.
$$
Also, $\lim_{z\to\infty}\mu(z)=\lim_{s\to 0+}\frac{\me e^{-s\eta}}{1-\me e^{-s\xi}}=\infty$. This shows that $\gamma\in (0,\infty)$.

Recall that, for $n\in\mn$, $(T_r^{(n)})_{r\in\mn}$ denotes some enumeration of the birth times in the $n$th generation of the general branching process. Put $B(n):=\inf_{r\geq 1} T_r^{(n)}$. By the famous Biggins result (Corollary on p.~635 in \cite{Biggins:1977}),
\begin{equation}\label{leftmost}
\lim_{n\to\infty}\frac{B(n)}{n}=\gamma\quad\text{a.s.}
\end{equation}
Since, for $n\in\mn$ and $t>0$, $\{H(t)>n\}=\{B(n)\leq t\}$ and, according to \eqref{leftmost}, $\lim_{n\to\infty}B(n)=+\infty$ a.s., we infer $H(t)<\infty$ a.s.

Finally, we have $B(H(t))>t\geq B(H(t)-1)$ a.s. The left-hand inequality ensures $\lim_{t\to\infty}H(t)=+\infty$ a.s.\ which together with \eqref{leftmost} proves \eqref{height} with the help of a standard sandwich argument.
\end{proof}

It is seldom possible to find the constant $\gamma$ explicitly. Here is one happy exception. Let $(\xi,\eta)=(|\log W|, |\log(1-W)|)$, where $W$ has a uniform distribution on $[0,\,1]$. The distribution of the sequence $(e^{-T_i})_{i\in\mn}$ is known as the {\it Griffiths--Engen--McCloskey distribution} with parameter $1$. In this case, $\mu(z)=ez$ for $z>0$ which gives $\gamma=e^{-1}$.

\subsection{Counterparts of the elementary renewal theorem for intermediate generations}\label{subsec:elem}

The simplest result of the renewal theory, called the elementary renewal theorem, tells us that
$$
U(t)=\sum_{i\geq 0}\mmp\{S_i\leq t\}~\sim~\frac{t}{{\tt m}},\quad t\to\infty,
$$
where ${\tt m}:=\me \xi<\infty$. Here and hereafter, the notation $f(t)\sim g(t)$ means that the ratio $f(t)/g(t)$ tends to $1$ as $t\to\infty$.

From \eqref{equ} it follows that, without any assumptions on $\eta$, 
\begin{equation}\label{eq:elem_v_1}
V(t)~\sim~\frac{t}{{\tt m}},\quad t\to\infty.
\end{equation}
This is a counterpart of the elementary renewal theorem for the perturbed random walks.

In this section we state two results on the first-order behavior of the convolutions powers $V_j$ of $V$. Our first result, Theorem \ref{thm:elem1}, deals with `early intermediate' generations satisfying $j=j(t)\to\infty$ and $j(t)=o(t^{1/2})$ as $t\to\infty$ as well as early generations. At this point we stress that even though both Theorem \ref{thm:elem1} and Theorem \ref{thm:elem2} hold true for early generations, the assumptions of these theorems are too restrictive as far as early generations are concerned. We refer to the forthcoming article \cite{Iksanov etal:2021} for a proper version of the elementary renewal theorem in early generations. Recall the standard notation $x\wedge y=\min (x,y)$ for $x,y\in\mr$.
\begin{thm}\label{thm:elem1}
Assume that either (i) $\E\xi^r<\infty$ for some $r\in (1,\,2]$ or (ii) $\mmp\{\xi>t\}~\sim~bt^{-r}$ for some $r\in (1,\,2)$ and some $b>0$. Suppose further that $\E(\eta\wedge t)=O(t^{2-r})$ as $t\to\infty$ with the same $r$ as in (i) or (ii). Then, for any integer-valued function $j=j(t)$ satisfying
$j(t)=o(t^{(r-1)/2})$ as $t\to\infty$,
\begin{equation}\label{eq:eq_elem1}
V_j(t)~\sim~\frac{t^j}{{\tt m}^j j!},\quad t\to\infty,
\end{equation}
where ${\tt m}=\me\xi<\infty$.
\end{thm}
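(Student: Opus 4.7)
The plan is to base the proof on the decomposition $V(t) = L(t) + W(t)$, where $L(t) := t/{\tt m}$ is the leading-order part and $W := V - L$. Under the hypotheses (i) or (ii) together with the growth condition on $\E(\eta \wedge t)$, the appendix's rate-of-convergence result for the elementary renewal theorem for PRWs supplies the key pointwise bound
$$
|W(t)| \leq C\, t^{2-r}, \quad t \geq 0,
$$
for a suitable constant $C > 0$ (the behaviour on a compact neighbourhood of $0$ is absorbed into $C$). Since $L^{\ast n}(t) = t^n/({\tt m}^n n!)$, \eqref{eq:eq_elem1} is equivalent to $V_j(t) - L^{\ast j}(t) = o(L^{\ast j}(t))$.

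The crux is to expand $V_j = (L+W)^{\ast j}$ by the binomial formula, valid in the commutative convolution algebra of locally finite signed measures:
$$
V_j(t) - L^{\ast j}(t) = \sum_{k=1}^{j} \binom{j}{k}\, L^{\ast(j-k)} \ast W^{\ast k}(t).
$$
The dominant contribution is $k = 1$. Writing
$$
L^{\ast(j-1)} \ast W(t) = \frac{1}{{\tt m}^{j-1}(j-2)!} \int_0^t (t-u)^{j-2}\, W(u)\, du,
$$
using $|W(u)| \leq C u^{2-r}$ and the Beta integral $\int_0^t u^{2-r}(t-u)^{j-2}\,du = \Gamma(3-r)\Gamma(j-1)/\Gamma(j+2-r) \cdot t^{j+1-r}$, I obtain
$$
\bigl|L^{\ast(j-1)} \ast W(t)\bigr| \leq \frac{C\,\Gamma(3-r)}{{\tt m}^{j-1}\,\Gamma(j+2-r)}\, t^{j+1-r}.
$$
Dividing $j$ times this bound by $L^{\ast j}(t)$ and exploiting $j!/\Gamma(j+2-r) \sim j^{r-1} \leq j$ (valid for $r\in(1,2]$ and $j$ large), the $k=1$ contribution to the relative error is of order $j^2\, t^{1-r}$, which is $o(1)$ precisely when $j = o(t^{(r-1)/2})$. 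This is exactly the hypothesis.

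For $k \geq 2$, I would control $|W^{\ast k}(t)|$ inductively via the identity $dW = dV - {\rm d}t/{\tt m}$, which lets each extra convolution with $W$ be split into a convolution against the positive measure $dV$ (which is close to Lebesgue, thanks to $V(t) = O(t)$) minus a plain integration. A clean induction then yields bounds of the form $|W^{\ast k}(t)| \leq C_k\, t^{k+1-r}$ with tractable constants $C_k$. Substituting these into the expansion and applying the same Beta-integral estimate as for $k=1$, each ratio $\binom{j}{k}L^{\ast(j-k)}\ast W^{\ast k}/L^{\ast j}$ is dominated by the $k=1$ ratio (times a factor summable in $k$), so the total error is still $o(L^{\ast j}(t))$ under $j = o(t^{(r-1)/2})$.

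The principal technical obstacle is uniform-in-$k$ control of the constants $C_k$: a naive use of the pointwise envelope $|W(u)| \leq Cu^{2-r}$, treated as if $W$ had linear total variation on $[0,t]$, would generate factors exponential in $k$ (through $|dW|([0,t]) \leq V(t) + t/{\tt m}$), which would ruin the summation over $k$ up to $j$. Exploiting the \emph{signed} cancellations coming from $dW = dV - {\rm d}t/{\tt m}$, rather than the crude total-variation bound, is what keeps $C_k$ polynomial enough for the combinatorial sum $\sum_{k\geq 2}\binom{j}{k}(\cdot)$ to remain $o(L^{\ast j}(t))$ within the whole intermediate regime $j = o(t^{(r-1)/2})$.
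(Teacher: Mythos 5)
Your $k=1$ computation is correct and does recover the threshold $j=o(t^{(r-1)/2})$, but the treatment of the terms $k\geq 2$ contains a genuine gap, and it is not merely a matter of tracking the constants $C_k$: the strategy of bounding $|W^{\ast k}|$ by a pointwise envelope and then integrating absolute values cannot close the argument. First, the envelope $|W^{\ast k}(t)|\leq C_k t^{k+1-r}$ is essentially sharp: take $\xi\equiv 1$, $\eta\equiv 1/2$ (so $r=2$, ${\tt m}=1$, $W$ bounded); then $V_2(M)=M(M+1)/2$, $\int_0^M V(y)\,{\rm d}y=M^2/2$, hence $W^{\ast 2}(M)=V_2(M)-2\int_0^M V+M^2/2=M/2$, which grows linearly even though $|W|\leq 1/2$. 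Second, feeding this sharp envelope into your Beta-integral estimate gives for $k=2$ a relative contribution of order
$$
\binom{j}{2}\,\frac{C_2\,\Gamma(4-r)}{{\tt m}^{j-2}\Gamma(j+2-r)}\,t^{j+1-r}\Big/\frac{t^j}{{\tt m}^j j!}\;\asymp\; j^{\,r+1}t^{1-r},
$$
which diverges whenever $t^{(r-1)/(r+1)}\ll j\ll t^{(r-1)/2}$ (e.g.\ $j^3/t\to\infty$ for $r=2$ and $j$ close to $t^{1/2}$). So your claim that each ratio for $k\geq 2$ is dominated by the $k=1$ ratio times a summable factor is false: each increment of $k$ multiplies the absolute-value bound by roughly $j/k$, and the sum over $k$ only stays small for $j=O(\log t)$. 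The true smallness of the $k\geq2$ terms comes from oscillation: in the example above $W^{\ast2}$ has amplitude $\asymp t$ but local mean $-1/12$ over each period, and it is this cancellation inside $\int W^{\ast2}(t-y)\,{\rm d}L^{\ast(j-2)}(y)$ --- not a pointwise bound on $W^{\ast2}$ --- that saves the day. Your appeal to ``signed cancellations coming from ${\rm d}W={\rm d}V-{\rm d}t/{\tt m}$'' names the right phenomenon but supplies no mechanism for propagating it through $j$ convolutions, and the target bound you state would not suffice even if proved.

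The paper avoids iterated convolutions of the error with itself altogether. Its Proposition \ref{prop:convolutions1} writes $f_j(t)=a\int_0^t f_{j-1}(y)\,{\rm d}y+r_{j-1}(t)$ and exploits the identity $r_{j-1}(t)=\int_{[0,t]}\bigl(f(t-y)-a(t-y)\bigr)\,{\rm d}f_{j-1}(y)$, which yields $|r_{j-1}(t)|\leq C(t+1)^{2-r}f_{j-1}(t)$: the error at each generation is a \emph{multiplicative} perturbation of the full $f_{j-1}$, of relative size $O(jt^{1-r})$ per step, so the accumulated relative error is $\exp(O(j^2t^{1-r}))$, i.e.\ $o(1)$ exactly when $j=o(t^{(r-1)/2})$. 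The convolution power $W^{\ast k}$ never appears. If you want to salvage a decomposition-based proof, you should replace your expansion by this one-step recursion (or, equivalently, sandwich $f_j$ between the explicit $W_j^{\pm}$ of the paper). A final minor point: the pointwise bound $|W(u)|\leq Cu^{2-r}$ fails near $u=0$ when $r<2$, since $V(u)$ may tend to $0$ arbitrarily slowly; use $C(1+u)^{2-r}$ as in the paper.
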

\begin{rem}
The condition $\E\eta^{r-1}<\infty$ is sufficient for $\me(\eta\wedge t)=O(t^{2-r})$, $t\to\infty$. This follows from
\begin{multline*}
\me(\eta\wedge t)=\int_0^t \mmp\{\eta>y\}{\rm d}y\leq \int_0^t \Big(\frac{t}{y}\Big)^{2-r}\mmp\{\eta>y\}{\rm d}y\\
=t^{2-r}\int_0^\infty y^{r-2}\mmp\{\eta>y\}{\rm d}y=(r-1)^{-1}\me\eta^{r-1}t^{2-r}.
\end{multline*}
\end{rem}

Specializing Theorem \ref{thm:elem1} to $r=2$ gives the following corollary which has already been obtained
via a slightly different argument in formula (4.6) of \cite{Buraczewski+Dovgay+Iksanov:2020}.

\begin{cor}\label{cor:r=2}
Assume that $\me\xi^2<\infty$ and $\me \eta<\infty$. Then relation \eqref{eq:eq_elem1} holds for any integer-valued function $j=j(t)$ satisfying $j(t)=o(t^{1/2})$ as $t\to\infty$.
\end{cor}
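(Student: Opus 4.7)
The plan is to deduce this corollary as the specialization of Theorem \ref{thm:elem1} (case (i)) to the exponent $r=2$. Since nothing new needs to be proved, the entire task is to verify that all three hypotheses of Theorem \ref{thm:elem1} reduce to the two stated assumptions of the corollary together with the condition $j(t)=o(t^{1/2})$.

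With $r=2$ the moment hypothesis $\E\xi^r<\infty$ is literally $\E\xi^2<\infty$. The auxiliary condition $\E(\eta\wedge t)=O(t^{2-r})$ becomes $\E(\eta\wedge t)=O(1)$ as $t\to\infty$; since $t\mapsto \E(\eta\wedge t)$ is nondecreasing and converges to $\E\eta$ by monotone convergence, this $O(1)$ bound is equivalent to $\E\eta<\infty$, which is the other hypothesis. Finally, the growth constraint $j(t)=o(t^{(r-1)/2})$ specializes to $j(t)=o(t^{1/2})$, matching the statement. With all three hypotheses verified, Theorem \ref{thm:elem1} delivers $V_j(t)\sim t^j/({\tt m}^j j!)$ as required.

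Because the corollary is a pure specialization, I do not see any genuine obstacle in its proof; the heavy lifting is shouldered by Theorem \ref{thm:elem1}. As an aside, an independent direct route is also available and is presumably what was used in \cite{Buraczewski+Dovgay+Iksanov:2020}: one exploits that $\E\xi^2<\infty$ forces $U(t)=t/{\tt m}+O(1)$, whence $V(t)=t/{\tt m}+O(1)$ via \eqref{equ} because $\E\eta<\infty$, and then expands $V_j=V^{\ast(j)}$ around the affine part $L(t):=t/{\tt m}$ via the binomial identity $V^{\ast(j)}=\sum_{k=0}^{j}\binom{j}{k}L^{\ast(j-k)}\ast R^{\ast k}$ with $R:=V-L$. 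The term $k=0$ produces the main asymptotic $t^j/({\tt m}^j j!)$, while the remaining $j$ terms must be shown to contribute $o(t^j/j!)$ uniformly for $j=o(t^{1/2})$. The subtlety there — controlling iterated convolutions of a merely bounded (not integrable) remainder — is precisely what the proof of Theorem \ref{thm:elem1} handles in a more general setting, so within the framework of the present paper appealing directly to that theorem is both cleaner and slightly stronger.
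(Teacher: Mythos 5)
Your proof is correct and is exactly the paper's route: the corollary is stated there as the specialization of Theorem \ref{thm:elem1} to $r=2$, and your verification that $\E(\eta\wedge t)=O(1)$ is equivalent to $\E\eta<\infty$ (via monotone convergence) is precisely the only point that needs checking. The aside about a direct convolution expansion is a fair description of the alternative argument in \cite{Buraczewski+Dovgay+Iksanov:2020}, but it is not needed here.
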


Given next is a quite surprising result which shows that the convolution power $V_j$ exhibits a phase transition in the generations $j$ satisfying $j=j(t)\sim {\rm const}\,\cdot t^{1/2}$ as $t\to\infty$. Here, further moment and smoothness assumptions seem to be indispensable. In particular, we assume that the distribution of $\xi$ is spread-out, which means that some convolution power of the distribution function $t\mapsto \mmp\{\xi\leq t\}$ has an absolutely continuous component.

\begin{thm}\label{thm:elem2}
Assume that the distribution of $\xi$ is spread-out, 
that $\me \xi^3<\infty$ and $\me \eta^2<\infty$. Then,
for any integer-valued function $j=j(t)$ satisfying 
$j(t)=o(t^{2/3})$ as $t\to\infty$, 
$$
V_j(t)~\sim~ \frac{t^j}{{\tt m}^j j!}\exp{\left(\frac{\gamma_0{\tt m}j^2}{t}\right)},\quad t\to\infty,
$$
where
\begin{equation}\label{eq:gamma_0_def}
\gamma_0:=\int_{[0,\,\infty)}{\rm d}(V(y)-{\tt m}^{-1}y)=\lim_{t\to\infty}(V(t)-{\tt m}^{-1}t)=\frac{\me\xi^2}{2{\tt m}^2}-\frac{\me\eta}{{\tt m}}
\end{equation}
may be positive, negative or zero.
\end{thm}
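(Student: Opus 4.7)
The plan is to combine a binomial expansion of $V^{*j}$ with quantitative control on the remainder $V(t) - t/{\tt m} - \gamma_0$. Define $L(t) := t\,\1_{[0,\infty)}(t)$ and $W(t) := V(t) - t/{\tt m}$ for $t \geq 0$, $W(t) := 0$ for $t<0$, so that $W(0+) = 0$ and $W(\infty) = \gamma_0$ by \eqref{eq:gamma_0_def}. Under the present assumptions, the counterpart of the key renewal theorem for perturbed random walks in the appendix supplies a quantitative rate for the convergence $W(y) \to \gamma_0$. Writing $V = {\tt m}^{-1}L + W$ and expanding the $j$-fold convolution yields
\[
V_j(t) \;=\; \sum_{k=0}^{j} \binom{j}{k}\, {\tt m}^{-(j-k)}\, \bigl(L^{*(j-k)}\ast W^{*k}\bigr)(t).
\]
Integration by parts (using $W^{*k}(0-)=0$) rewrites each summand with $0 \leq k \leq j-1$ as $\binom{j}{k}{\tt m}^{-(j-k)}\!\int_0^t (t-y)^{j-k-1} W^{*k}(y)/(j-k-1)!\,dy$, while the $k=j$ term equals $W^{*j}(t)$. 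Because the signed measure $dW$ has total mass $\gamma_0$, $W^{*k}(y) \to \gamma_0^k$ as $y \to \infty$; set $\rho_k(y) := W^{*k}(y) - \gamma_0^k$.

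Replacing $W^{*k}(y)$ with $\gamma_0^k$ yields the main sum
\[
M(t,j) \;=\; \sum_{k=0}^j \binom{j}{k}\gamma_0^k\, \frac{t^{j-k}}{{\tt m}^{j-k}(j-k)!} \;=\; \frac{t^j}{{\tt m}^j\, j!}\sum_{k=0}^j \frac{(j^{\underline{k}})^2}{k!}\left(\frac{\gamma_0{\tt m}}{t}\right)^k,
\]
where $j^{\underline{k}} := j(j-1)\cdots(j-k+1)$. The ratio of consecutive terms shows this sum concentrates near $k^\ast$ of order $j^2/t$, and the hypothesis $j(t) = o(t^{2/3})$ ensures $k^\ast = o(t^{1/3}) = o(j)$. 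In the relevant range $k = O(k^\ast)$, the expansion $j^{\underline{k}}/j^k = \exp(-k^2/(2j) + O(k^3/j^2))$ combined with $k^2/j = O(j^3/t^2) = o(1)$ gives $(j^{\underline{k}}/j^k)^2 = 1+o(1)$, so
\[
\frac{(j^{\underline{k}})^2}{k!}\left(\frac{\gamma_0{\tt m}}{t}\right)^k \;=\; (1+o(1))\,\frac{(\gamma_0{\tt m} j^2/t)^k}{k!}.
\]
Poisson-type tail bounds handle $k \gg k^\ast$ and justify extending the sum to $k = \infty$, producing $M(t,j) \sim (t^j/({\tt m}^j j!))\exp(\gamma_0{\tt m} j^2/t)$. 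This is exactly the point at which the exponent $2/3$ emerges: it is the critical threshold beyond which the cubic Taylor correction to $\log(1+\gamma_0{\tt m} j/t)$ ceases to be negligible.

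The main obstacle is the error sum
\[
E(t,j) \;:=\; \sum_{k=1}^{j-1}\binom{j}{k}\,{\tt m}^{-(j-k)}\!\int_0^t \rho_k(y)\,\frac{(t-y)^{j-k-1}}{(j-k-1)!}\,dy \;+\; W^{*j}(t),
\]
which must be shown to be $o(M(t,j))$ uniformly over $j = o(t^{2/3})$. For this one needs a quantitative bound of the form $|\rho_k(y)| \leq A_0 c_0^k \psi(y)$ with $\psi(y) \to 0$ at a concrete rate and $A_0, c_0$ absolute constants, obtained by induction on $k$ from the recursion $W^{*(k+1)} = dW \ast W^{*k}$ together with the appendix's rate for $W - \gamma_0$ (available under spread-out $\xi$ and $\E\xi^3, \E\eta^2 < \infty$) and careful tracking of convolution constants. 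Splitting each integral at an intermediate scale in $y$ and summing the resulting estimates against $\binom{j}{k}/{\tt m}^{j-k}$ produces the required negligibility, with the balance once more forcing $j(t) = o(t^{2/3})$.
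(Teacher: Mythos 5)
Your high-level plan (decompose $V$ into a linear part plus a remainder, binomially expand the $j$-fold convolution, and show the cross terms are negligible) is the same as the paper's, which proves a general analytic statement (Proposition \ref{prop:convolutions2}) under the hypothesis $\int_{[0,\infty)}y\,|{\rm d}(V(y)-{\tt m}^{-1}y)|<\infty$ and then verifies that hypothesis from spread-outness, $\me\xi^3<\infty$ and $\me\eta^2<\infty$ via Frenk's bound $\int y\,|{\rm d}(U(y)-{\tt m}^{-1}y)|<\infty$. But your execution has two genuine gaps, both traceable to one decision: you leave the constant $\gamma_0$ inside the remainder $W$, whereas the paper absorbs it into the linear part, writing $V(t)=({\tt m}^{-1}t+\gamma_0)_++\widetilde\varepsilon(t)=\ell(t)+\widetilde\varepsilon(t)$. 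With that choice $\ell^{\ast(j)}(t)=({\tt m}^{-1}t+\gamma_0 j)_+^j/j!$ is an exact closed form (no sum to estimate), and $\widetilde\varepsilon$ is absolutely integrable with finite total variation, so the cross terms are controlled by the submultiplicativity of total variation under convolution ($\mathcal V(\ell\ast\widetilde\varepsilon)\leq\widetilde C$, iterated).

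First gap: your claimed induction bound $|\rho_k(y)|\leq A_0c_0^k\psi(y)$ with a $k$-independent rate $\psi(y)\to0$ does not survive the recursion $\rho_{k+1}(y)=\int_{[0,y]}\rho_k(y-z)\,{\rm d}W(z)+\gamma_0^k(W(y)-\gamma_0)$. Since $\psi(y-z)\geq\psi(y)$, the convolution integral forces you to split at, say, $y/2$, and the effective rate after one step is roughly $\max(\psi(y/2),\mathcal V_{(y/2,\infty)}(W))$; after $k$ steps you are evaluating $\psi$ at $y/2^k$, which is vacuous for $k$ up to $j=j(t)\to\infty$ and $y$ up to $t$. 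No pointwise-rate induction of this shape closes; this is precisely why the paper switches to total-variation bookkeeping on an integrable remainder. (Relatedly, the appendix does not contain the quantitative pointwise rate you invoke; what is actually available and used is the weighted variation bound.) Second gap: when $\gamma_0<0$ your main sum $\sum_k\frac{(j^{\underline k})^2}{k!}(\gamma_0{\tt m}/t)^k$ is alternating, its individual terms near $k^\ast\asymp j^2/t$ have magnitude of order $e^{+|\gamma_0|{\tt m}j^2/t}$, while the target value is $e^{-|\gamma_0|{\tt m}j^2/t}$ --- exponentially smaller. Term-by-term replacement of $(j^{\underline k})^2$ by $j^{2k}$ up to $(1+o(1))$ factors therefore cannot yield the asymptotics of the sum; the cancellation must be handled exactly, which is what the closed form $({\tt m}^{-1}t+\gamma_0 j)_+^j/j!$ accomplishes for free. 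The paper's error analysis also splits into the cases $\gamma_0\geq0$ and $\gamma_0<0$ for a similar reason. To repair your argument, move $\gamma_0$ into the linear factor as the paper does; the rest of your outline then essentially becomes the published proof.
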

\begin{rem}
Assume that $(\xi, \eta)=(|\log W|, |\log(1-W)|)$, where $W$ is a random variable having a uniform distribution on $[0,1]$. Then
$$
V_j(t)=\frac{t^j}{j!},\quad j\in\mn,\quad t\geq 0.
$$
This is in line with the asymptotics provided by Theorem \ref{thm:elem2}, for, in this case, $\gamma_0=0$ and ${\tt m}=1$.
\end{rem}

\subsection{Counterparts of the key renewal theorem and Blackwell's theorem for intermediate generations}

In the renewal theory the key renewal theorem is usually obtained as a corollary to Blackwell's theorem. We proceed differently by first proving a counterpart of the key renewal theorem (Theorem \ref{thm:keyren}) and then obtain a counterpart of Blackwell's theorem (Corollary \ref{black}) as a corollary. Recall that the distribution of $\xi$  is nonlattice if it is not concentrated on any centered lattice of the form $(dn)_{n\in\mn_0}$ for some $d>0$.
\begin{thm}\label{thm:keyren}
Let $f: [0,\infty)\to [0,\infty)$ be a directly Riemann integrable (dRi) function on $[0,\infty)$. Assume that either (a) or (b)
below holds true:
\begin{itemize}
\item[(a)] the distribution of $\xi$ is nonlattice, the conditions of Theorem \ref{thm:elem1} hold for some $r\in (1,\,2]$ (or some $r\in (1,\,2)$) and $j(t)=o(t^{(r-1)/2})$ as $t\to\infty$;
\item[(b)] the conditions of Theorem \ref{thm:elem2} hold and $j(t)=o(t^{2/3})$ as $t\to\infty$.
\end{itemize}
Then
\begin{equation}\label{eq:keyren}
(f\ast V_j)(t)=\int_{[0,\,t]}f(t-y){\rm d}V_j(y)~\sim~ \Big(\frac{1}{{\tt m}}\int_0^\infty f(y){\rm d}y\Big) V_{j-1}(t),\quad t\to\infty,
\end{equation}
where ${\tt m}=\me\xi<\infty$	, and $V_{j-1}(t)$ on the right-hand side can be replaced with $t^{j-1}/(m^{j-1} (j-1)!)$ in the case (a),
or with $t^{j-1}/(m^{j-1} (j-1)!)\exp{(\gamma_0{\tt m}j^2/t)}$ in the case (b).
\end{thm}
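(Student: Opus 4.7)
My plan is to reduce the statement to a one-step application of the perturbed random walk key renewal theorem (which the excerpt promises in the appendix) together with Theorems \ref{thm:elem1} and \ref{thm:elem2}. The associativity of the Lebesgue--Stieltjes convolution gives $V_j=V_{j-1}\ast V$, so
\[
(f\ast V_j)(t)=(g\ast V_{j-1})(t),\qquad g(t):=(f\ast V)(t).
\]
By the key renewal theorem for perturbed random walks (appendix), since $f$ is dRi and, in either case (a) or (b), $\xi$ is nonlattice and ${\tt m}<\infty$, we have $g(t)\to c_f:={\tt m}^{-1}\int_0^\infty f(y)\,{\rm d}y$; in particular $g$ is bounded on $[0,\infty)$ by some $M<\infty$.

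Next, for an arbitrary $\varepsilon>0$ pick $A=A(\varepsilon)>0$ so large that $|g(s)-c_f|<\varepsilon$ for every $s\geq A$. Splitting the convolution integral at $t-A$ yields
\[
(g\ast V_{j-1})(t)-c_f V_{j-1}(t)=\int_{[0,t-A]}(g(t-y)-c_f)\,{\rm d}V_{j-1}(y)+\int_{(t-A,t]}(g(t-y)-c_f)\,{\rm d}V_{j-1}(y),
\]
whence
\[
\bigl|(g\ast V_{j-1})(t)-c_f V_{j-1}(t)\bigr|\leq \varepsilon\, V_{j-1}(t-A)+(M+|c_f|)\bigl(V_{j-1}(t)-V_{j-1}(t-A)\bigr).
\]
Dividing by $V_{j-1}(t)$ and letting $t\to\infty$ with $\varepsilon$ then sent to $0$, the theorem will follow once we establish the uniform ratio relation
\begin{equation}\label{eq:ratio-plan}
\lim_{t\to\infty}\frac{V_{j-1}(t-A)}{V_{j-1}(t)}=1
\end{equation}
for every fixed $A>0$ along the allowed sequence $j=j(t)$.

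The verification of \eqref{eq:ratio-plan} is the main technical obstacle, because $j$ depends on $t$ and therefore the asymptotic form of $V_{j-1}$ drifts. In case (a), Theorem \ref{thm:elem1} applied to $j-1$ yields $V_{j-1}(t-A)/V_{j-1}(t)\sim(1-A/t)^{j-1}=\exp((j-1)\log(1-A/t))\to 1$, since $j=o(t^{(r-1)/2})=o(t)$. In case (b), Theorem \ref{thm:elem2} gives the extra exponential factor $\exp\bigl(\gamma_0{\tt m}(j-1)^2(1/(t-A)-1/t)\bigr)=\exp(O(j^2 A/t^2))$, which tends to $1$ because $j=o(t^{2/3})$ forces $j^2/t^2=o(t^{-2/3})\to 0$; the polynomial factor $(1-A/t)^{j-1}$ is handled as before. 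Given \eqref{eq:ratio-plan}, the displayed bound shows $\limsup_{t\to\infty}|(g\ast V_{j-1})(t)/V_{j-1}(t)-c_f|\leq\varepsilon$, proving \eqref{eq:keyren}. Finally, the substitutability of $V_{j-1}(t)$ on the right-hand side by the explicit asymptotics $t^{j-1}/({\tt m}^{j-1}(j-1)!)$, respectively by its exponentially corrected version, is immediate from Theorems \ref{thm:elem1} and \ref{thm:elem2} since the assumptions on $j$ are inherited by $j-1$.
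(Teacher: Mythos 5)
Your overall strategy coincides with the paper's: write $(f\ast V_j)=(g\ast V_{j-1})$ with $g=f\ast V$, use the one-generation results from the appendix (Lemmas \ref{key} and \ref{keylight}) to get $g(t)\to c_f$ and $g$ bounded, split the convolution at $t-A$, and reduce everything to showing that the boundary contribution $V_{j-1}(t)-V_{j-1}(t-A)$ is $o(V_{j-1}(t))$. The one place where you and the paper part ways is precisely this last step, and it is the step the authors explicitly flag as delicate: Theorem \ref{thm:elem1} (resp.\ Theorem \ref{thm:elem2}), as stated, describes the asymptotics of $s\mapsto V_{j(s)-1}(s)$, i.e.\ with the index re-evaluated at the running argument, whereas what you need is $V_{j(t)-1}(t-A)$ with the index frozen at $j(t)-1$. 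Writing ``Theorem \ref{thm:elem1} applied to $j-1$ yields $V_{j-1}(t-A)/V_{j-1}(t)\sim(1-A/t)^{j-1}$'' glosses over exactly this mismatch. Your argument can be repaired: since the elementary renewal theorems hold for \emph{any} integer-valued function satisfying the growth condition, you may apply them to the shifted function $s\mapsto j(s+A)-1$, which still satisfies $o(s^{(r-1)/2})$ (resp.\ $o(s^{2/3})$), and then substitute $s=t-A$; after that your computations in both cases (a) and (b) are correct. The paper avoids the issue altogether by a different device: by subadditivity \eqref{subad}, $V_{j-1}(t)-V_{j-1}(t-t_0)=\int_{[0,\,t]}(V(t-y)-V(t-t_0-y))\,{\rm d}V_{j-2}(y)\leq U(t_0)V_{j-2}(t)$, and $V_{j-2}(t)/V_{j-1}(t)\to 0$ follows from Theorems \ref{thm:elem1} and \ref{thm:elem2} applied only at the single argument $t$; that route never evaluates $V_{j(t)-1}$ away from $t$. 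Note also that boundedness of $g$ on all of $[0,\infty)$ does not follow ``in particular'' from its convergence alone; it is supplied by Lemma \ref{keylight} (or by local boundedness of $V$ together with boundedness of the dRi function $f$). With these two repairs your proof is complete and essentially equivalent to the paper's.
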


Upon taking $f(y)=\1_{[0,\,h)}(y)$ in Theorem \ref{thm:keyren} we immediately obtain the following. 

\begin{cor}\label{black}
Let $h>0$ be fixed. Under the assumptions of Theorem \ref{thm:keyren}
\begin{equation}\label{bla_early}
V_j(t+h)-V_j(t)~\sim~\frac{h}{m}V_{j-1}(t),\quad t\to\infty.
\end{equation}
\end{cor}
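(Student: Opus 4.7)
The plan is to invoke Theorem~\ref{thm:keyren} with the indicator $f(y) = \1_{[0,\,h)}(y)$, which is bounded, compactly supported, and a.e.\ continuous, hence directly Riemann integrable on $[0,\infty)$ with $\int_0^\infty f(y)\,{\rm d}y = h$. For $t \geq h$ the convolution computes to
$$
(f \ast V_j)(t) = \int_{[0,\,t]} \1_{[0,\,h)}(t-y)\,{\rm d}V_j(y) = V_j(t) - V_j(t-h),
$$
since the integrand is supported on the set $\{y : t-h < y \leq t\}$. Theorem~\ref{thm:keyren} therefore delivers the backward-difference asymptotics
$$
V_{j(t)}(t) - V_{j(t)}(t-h) \sim \frac{h}{{\tt m}} V_{j(t)-1}(t), \qquad t\to\infty.
$$

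To convert this into the forward-difference form stated in the corollary, I would apply Theorem~\ref{thm:keyren} to the shifted generation function $\tilde{\jmath}(s) := j(s-h)$ (defined for $s \geq h$), which is integer-valued and clearly satisfies the same asymptotic growth hypotheses as $j$. Substituting $t = s - h$ in the resulting asymptotic yields
$$
V_{j(t)}(t+h) - V_{j(t)}(t) \sim \frac{h}{{\tt m}} V_{j(t)-1}(t+h), \qquad t\to\infty.
$$

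The remaining task is to replace $V_{j(t)-1}(t+h)$ by $V_{j(t)-1}(t)$ on the right-hand side. In case (a), Theorem~\ref{thm:elem1} applied at both $t$ and $t+h$ reduces the ratio $V_{j(t)-1}(t+h)/V_{j(t)-1}(t)$ to $(1+h/t)^{j(t)-1}$, whose logarithm is $(j(t)-1)\log(1+h/t) \sim (j(t)-1)h/t$ and tends to $0$ since $j(t) = o(t^{(r-1)/2}) = o(t)$. In case (b), Theorem~\ref{thm:elem2} supplies an additional factor $\exp(\gamma_0 {\tt m}(j(t)-1)^2/s)$ at each of $s = t$ and $s = t+h$; its contribution to the ratio has logarithm $\gamma_0{\tt m}(j(t)-1)^2(1/(t+h)-1/t) = O(j(t)^2/t^2)$, which vanishes because $j(t) = o(t^{2/3})$ forces $j(t)^2/t^2 \to 0$.

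No real obstacle arises: once Theorem~\ref{thm:keyren} is available the corollary is essentially just its specialization to $f = \1_{[0,\,h)}$. The only care needed is the routine shift of the generation function to convert backward into forward increments, together with the elementary verification that a bounded translation of the argument does not disturb the leading-order asymptotics of $V_{j-1}$.
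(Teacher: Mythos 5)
Your proposal is correct and follows the paper's own route: the paper proves the corollary precisely by taking $f=\1_{[0,\,h)}$ in Theorem \ref{thm:keyren} and declaring the result immediate. The extra steps you supply --- shifting the generation function to convert the backward difference $V_j(t)-V_j(t-h)$ into the forward difference $V_j(t+h)-V_j(t)$, and checking via Theorems \ref{thm:elem1} and \ref{thm:elem2} that $V_{j-1}(t+h)\sim V_{j-1}(t)$ because $j(t)=o(t)$ --- are exactly the routine details the authors leave implicit, and your verification of them is sound.
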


\subsection{A couple of previously known results}

In this section we collect two previously known facts concerning the asymptotic behaviour of $N_j$ in the intermediate generations. They are borrowed from \cite{Iksanov+Marynych+Samoilenko:2020} and stated here for integrity and the reader's convenience. We write ${\overset{{\rm f.d.d.}}\longrightarrow}$ to denote weak convergence of finite-dimensional distributions.

\begin{thm}[Multivariate central limit theorem for $(N_j(t))_{t\geq 0}$]\label{main5}
Assume that ${\tt s}^2={\rm Var}\,\xi\in (0,\infty)$ and $\me \eta<\infty$. Let $j=j(t)$ be any positive integer-valued function satisfying $j(t)\to \infty$ and $j(t)=o(t^{1/2})$ as $t\to\infty$. Then, as $t\to\infty$,
\begin{equation}\label{clt22}
\left(\frac{\lfloor j(t)\rfloor^{1/2}(\lfloor j(t)u\rfloor-1)!}{({\tt s}^2{\tt m}^{-2\lfloor j(t)u\rfloor-1}t^{2\lfloor j(t)u\rfloor-1})^{1/2}}\bigg(N_{\lfloor j(t)u\rfloor}(t)-V_{\lfloor j(t)u \rfloor}(t)\bigg)\right)_{u>0}\\
{\overset{{\rm f.d.d.}}\longrightarrow}~ \Bigg(\int_{[0,\,\infty)}e^{-uy}{\rm d}B(y)\Bigg)_{u>0},
\end{equation}
where $(B(v))_{v\geq 0}$ is a standard Brownian motion.
\end{thm}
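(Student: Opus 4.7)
My plan is to isolate the Gaussian contribution to $N_j(t)-V_j(t)$ that is inherited from the first-generation perturbed random walk and to show that all other sources of randomness are asymptotically negligible under the normalization in \eqref{clt22}.

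First, I would decompose. Conditioning on the first generation and invoking the basic identity \eqref{basic1232}, write $N_j(t)-V_j(t) = A_j(t)+B_j(t)$, where
$$A_j(t) := \sum_{r\geq 1}\bigl(N_{j-1}^{(r)}(t-T_r)-V_{j-1}(t-T_r)\bigr)\1_{\{T_r\leq t\}}$$
is a conditionally centered sum of independent copies of $N_{j-1}-V_{j-1}$, and
$$B_j(t) := \int_{[0,t]} V_{j-1}(t-y)\,\mathrm{d}\bigl(N(y)-V(y)\bigr)$$
is a linear functional of the centered first-generation counting process. By branching independence the two terms are (conditionally on $T$) uncorrelated, so their variances add.

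The heart of the argument is the asymptotic analysis of $B_{\lfloor j(t)u\rfloor}(t)$. Using the Donsker-type functional limit theorem $(N(ty)-V(ty))/\sqrt{t}\Rightarrow ({\tt s}/{\tt m}^{3/2})W(y)$ for a standard Brownian motion $W$ (valid under ${\tt s}^2<\infty$ and $\me\eta<\infty$), the change of variables $y=t\tau/j(t)$, and the pointwise relation
$$V_{\lfloor j(t)u\rfloor-1}(t-y)\;\sim\;V_{\lfloor j(t)u\rfloor-1}(t)\bigl(1-\tau/j(t)\bigr)^{\lfloor j(t)u\rfloor-1}\;\sim\;V_{\lfloor j(t)u\rfloor-1}(t)\,e^{-u\tau}$$
(which follows from Corollary \ref{cor:r=2} via a Taylor expansion), one obtains
$$B_{\lfloor j(t)u\rfloor}(t)\;\approx\;V_{\lfloor j(t)u\rfloor-1}(t)\,\frac{{\tt s}}{{\tt m}^{3/2}}\sqrt{\tfrac{t}{j(t)}}\int_{[0,\infty)}e^{-u\tau}\,\mathrm{d}W(\tau).$$
Inserting the asymptotics $V_{\lfloor j(t)u\rfloor-1}(t)\sim t^{\lfloor j(t)u\rfloor-1}/\bigl({\tt m}^{\lfloor j(t)u\rfloor-1}(\lfloor j(t)u\rfloor-1)!\bigr)$ from Corollary \ref{cor:r=2} and multiplying by the normalization in \eqref{clt22} makes the prefactors collapse to $1$, leaving precisely $\int_0^\infty e^{-u\tau}\,\mathrm{d}W(\tau)$.

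The crucial point for the multivariate conclusion is that the Brownian motion $W$ is \emph{shared} across all values of $u$, since it is driven by the single first-generation PRW. Consequently the left-hand side of \eqref{clt22} converges jointly to $\bigl(\int_0^\infty e^{-u\tau}\,\mathrm{d}W(\tau)\bigr)_{u>0}$, which coincides in distribution with $\bigl(\int_{[0,\infty)}e^{-uy}\,\mathrm{d}B(y)\bigr)_{u>0}$. Finite-dimensional convergence then follows by continuous mapping applied to the Donsker approximation, together with an integrability bound that lets one pass from bounded $\tau$ to the full integral. The main obstacle I expect is Step 1's second half: showing that $A_j(t)$ is of smaller order after normalization. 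Because the generation index changes with $t$, one cannot simply invoke a fixed Gaussian approximation for $N_{j-1}$ inside the sum; instead, one must iterate the variance identity $\mathrm{Var}\,A_j(t)=\int_{[0,t]}\mathrm{Var}\,N_{j-1}(t-y)\,\mathrm{d}V(y)$ and propagate moment bounds recursively, using the hypothesis $j(t)=o(t^{1/2})$ in a critical way to close the estimate.
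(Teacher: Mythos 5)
First, a point of order: the paper does not prove Theorem \ref{main5} at all. It appears in the subsection ``A couple of previously known results'' and is explicitly borrowed from \cite{Iksanov+Marynych+Samoilenko:2020}, so there is no in-paper proof to compare against. That said, your sketch follows essentially the strategy used in that reference (and in \cite{Buraczewski+Dovgay+Iksanov:2020}): split $N_j-V_j$ into the ``inherited'' first-generation fluctuation $B_j=V_{j-1}\ast(N-V)$ plus the conditionally centered remainder $A_j$, show that $B_j$ carries asymptotically all of the variance, and identify its limit through the functional CLT for the perturbed random walk on the time scale $t/j(t)$. Your bookkeeping is sound: the identity $\mathrm{Var}\,A_j(t)=\int_{[0,t]}\mathrm{Var}\,N_{j-1}(t-y)\,{\rm d}V(y)$ and the conditional decorrelation of $A_j$ and $B_j$ are both correct; iterating the recursion $\mathrm{Var}\,N_k=(\mathrm{Var}\,N_{k-1})\ast V+\mathrm{Var}\,B_k$ one finds that successive terms decrease by factors of order $k/t$, so the $k$th term dominates and $A_j$ is indeed negligible. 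The normalization also checks out: the prefactor $V_{k-1}(t)\,{\tt s}{\tt m}^{-3/2}(t/j)^{1/2}$ is exactly the reciprocal of the factor in \eqref{clt22}, and the joint convergence for several $u$ comes for free because a single driving process $N-V$ underlies every coordinate.

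Two genuine gaps remain, both of which you flag but do not close. (i) The replacement $V_{k-1}(t-y)\approx V_{k-1}(t)e^{-u\tau}$ must hold uniformly over the whole range of integration, not merely pointwise for fixed $\tau$; Corollary \ref{cor:r=2} is a pointwise asymptotic, and one needs two-sided bounds of the kind produced in Proposition \ref{prop:convolutions1} (built from \eqref{lord2}) both to control the approximation error uniformly and to justify truncating the integral at large $\tau$ before applying Donsker plus integration by parts. (ii) The negligibility of $A_j$ cannot be waved through: because the generation index grows with $t$, no fixed-generation estimate applies, and the recursive variance bound must be carried out with explicit constants that remain summable along the induction; this is where $j(t)=o(t^{1/2})$ (together with $\me\eta<\infty$) is actually consumed. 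Neither issue is fatal --- both are resolved in \cite{Iksanov+Marynych+Samoilenko:2020} --- but as written your proposal is a correct roadmap rather than a complete proof.
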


According to Proposition 3.1, Theorems 3.2 and 3.3 in \cite{Buraczewski+Dovgay+Iksanov:2020}, the centering $V_{\lfloor j(t)u \rfloor}(t)$ in \eqref{clt22} can be replaced by its leading term
$$
t^{\lfloor j(t)u \rfloor}/((\lfloor j(t)u \rfloor)!{\tt m}^{\lfloor j(t)u \rfloor}),
$$
provided that $j(t)=o(t^{1/3})$. For functions $t\mapsto j(t)$ which grow faster, this is not always the case. Plainly, the possibility/impossibility of such a replacement is justified by a second-order behavior of $V_j$. It should come as no surprise that second-order results for $V_j$ require more restrictive assumptions on the distributions of $\xi$ and $\eta$ than the corresponding first-order results. The following proposition which is concerned with the rate of convergence in the elementary renewal theorem for $V_j$ was proved in Proposition 8.1 of \cite{Iksanov+Marynych+Samoilenko:2020}.
\begin{assertion}\label{prop:sec_order_behaviour}
Assume that the distribution of $\xi$ has an absolutely continuous component, that $\me e^{\beta_1\xi}<\infty$, $\me e^{\beta_2\eta}<\infty$ for some $\beta_1,\beta_2>0$ and
$$
\gamma_0=\frac{\me\xi^2}{2{\tt m}^2}-\frac{\me \eta}{{\tt m}}>0.
$$ Then
\begin{equation}\label{prec}
V_j(t)-\frac{t^j}{j!{\tt m}^j}~\sim~\frac{\gamma_0 jt^{j-1}}{(j-1)!{\tt m}^{j-1}},\quad t\to\infty
\end{equation}
whenever $j=j(t)=o(t^{1/2})$ as $t\to\infty$ ($j$ is allowed to be fixed).
\end{assertion}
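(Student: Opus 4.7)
The plan is to establish \eqref{prec} by first sharpening the renewal asymptotic for $V$ to an exponentially small remainder, then exploiting a binomial decomposition of the $j$th convolution power. For the first step I would use the representation $V=U*G$: under the assumption that the distribution of $\xi$ has an absolutely continuous component and $\me e^{\beta_1\xi}<\infty$, Stone's refinement of the renewal theorem yields $U(t)=t/{\tt m}+\me\xi^2/(2{\tt m}^2)+O(e^{-\delta_1 t})$ for some $\delta_1>0$; convolving with $G$ and using $\me e^{\beta_2\eta}<\infty$ gives
$$
V(t)=\frac{t}{{\tt m}}+\gamma_0+R(t),\qquad |R(t)|\leq Ce^{-\delta t},
$$
for some $\delta>0$, with moreover the Lebesgue--Stieltjes measure $dV$ decomposing as $d\mu_0+d\theta$, where $d\mu_0=(1/{\tt m})\,dt+\gamma_0\,\delta_0(dt)$ and $\theta$ is a finite signed measure on $[0,\infty)$ with $\theta([0,\infty))=0$ and $\int_0^\infty e^{\delta' t}|d\theta|(t)<\infty$ for some $\delta'>0$.

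Since $\widehat{\mu_0}(s)=({\tt m}s)^{-1}+\gamma_0$, taking the $j$th power and inverting gives the explicit expression
$$
\mu_0^{*j}([0,t])=\sum_{l=0}^{j}\binom{j}{l}\gamma_0^{l}\,\frac{t^{j-l}}{(j-l)!\,{\tt m}^{j-l}}.
$$
The $l=0$ and $l=1$ contributions are exactly the asserted leading and subleading terms $t^j/(j!{\tt m}^j)$ and $a_1:=j\gamma_0 t^{j-1}/((j-1)!{\tt m}^{j-1})$. Consecutive ratios satisfy $a_{l+1}/a_l=\gamma_0{\tt m}(j-l)^2/((l+1)t)$, so that $\sum_{l\geq 2}a_l\leq a_1\cdot(\exp(\gamma_0{\tt m}(j-1)^2/t)-1)=O(j^2/t)\cdot a_1$, which is $o(a_1)$ precisely under the hypothesis $j=o(t^{1/2})$.

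It remains to expand $V_j=(\mu_0+\theta)^{*j}=\mu_0^{*j}+\sum_{k=1}^{j}\binom{j}{k}\mu_0^{*(j-k)}*\theta^{*k}$ and show that the $k\geq 1$ contributions are $o(a_1)$. The crucial observation is that $\theta^{*k}([0,\infty))=\theta([0,\infty))^k=0$ for $k\geq 1$, which lets one rewrite
$$
(\mu_0^{*(j-k)}*\theta^{*k})([0,t])=-\int_{[0,\infty)}\bigl[\mu_0^{*(j-k)}([0,t])-\mu_0^{*(j-k)}([0,t-y])\bigr]\theta^{*k}(dy),
$$
using the exponential tail of $|\theta^{*k}|$ (propagated inductively from the finite exponential moment of $|\theta|$) to localize $y=O(1)$, and bounding the bracketed increment by $y$ times the density of $\mu_0^{*(j-k)}$ near $t$, which is of order $t^{j-k-1}/((j-k-1)!{\tt m}^{j-k})$. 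Inserting the binomial weight and summing in $k$ then gives a total contribution of order $(j^2/t)\,a_1$, hence $o(a_1)$. The main obstacle is this last step: controlling $|\theta^{*k}|$ uniformly in $k$ up to $j$ while simultaneously exploiting the zero-mass cancellation at each order, which is precisely where the exponential moment assumptions on both $\xi$ and $\eta$ are indispensable; the hypothesis $\gamma_0>0$ then ensures the announced second-order asymptotic is nondegenerate rather than buried inside the error.
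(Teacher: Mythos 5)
The paper does not actually prove this proposition: it imports it verbatim from Proposition 8.1 of \cite{Iksanov+Marynych+Samoilenko:2020}, so there is no in-paper argument to compare yours against. Judged on its own terms, your strategy is the natural one and, as far as I can check, it works: the hypotheses (absolutely continuous component plus two-sided exponential moments) are exactly what the exponential-rate version of Stone's decomposition requires, and the binomial expansion of $(\mu_0+\theta)^{\ast(j)}$ together with the zero-mass cancellation $\theta^{\ast(k)}([0,\infty))=0$ does deliver \eqref{prec}. Two places need to be made precise before this is a proof. First, you need Stone's decomposition at the level of measures, not functions: write ${\rm d}U-{\tt m}^{-1}{\rm d}t=(u_1(t)-{\tt m}^{-1}){\rm d}t+{\rm d}U_2$ with $\int e^{\delta y}\big(|u_1(y)-{\tt m}^{-1}|{\rm d}y+{\rm d}U_2(y)\big)<\infty$; only then does convolving with $G$ and using $\me e^{\beta_2\eta}<\infty$ give the exponential integrability of $|{\rm d}\theta|$ that your induction on $|\theta^{\ast(k)}|$ relies on, and the identity $\theta([0,\infty))=0$ comes from matching total masses with $\gamma_0=\me\xi^2/(2{\tt m}^2)-\me\eta/{\tt m}$. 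Second, the bound ``increment of $\mu_0^{\ast(j-k)}$ over $[t-y,\,t]$ is at most $y$ times the density at $t$'' is valid for all $0\le y\le t$ (the density is increasing, so no localization to $y=O(1)$ is needed), but it fails for $y>t$ because of the atom $\gamma_0^{\,j-k}\delta_0$; that range must be handled separately via $|\theta^{\ast(k)}|((t,\infty))\le A^k e^{-\delta' t}$ with $A:=\int e^{\delta' y}|{\rm d}\theta|(y)$, which is harmless since $j=o(t^{1/2})$. With $\int y\,|{\rm d}\theta^{\ast(k)}|(y)\le k\,\mathcal{V}_{[0,\infty)}(\theta)^{k-1}\int y\,|{\rm d}\theta|(y)$ and the elementary estimate comparing the $k$th binomial term to the $k=1$ term, the whole $k\ge 1$ block is $O(j/t)$ times the second-order term $\gamma_0 jt^{j-1}/((j-1)!{\tt m}^{j-1})$, and your $l\ge 2$ estimate for $\mu_0^{\ast(j)}$ is correct; note finally that $\gamma_0>0$ is needed only so that the statement is a genuine asymptotic equivalence, since your argument yields the two-term expansion with error $o\big(jt^{j-1}/((j-1)!{\tt m}^{j-1})\big)$ for either sign of $\gamma_0$.
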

Formula \eqref{prec} can be thought of as a generalization of formulae \eqref{eq:gamma_0_def} and \eqref{eq:sec_order_U}. These provide the second-order behaviour of the functions $V$ and $U$, respectively.

\section{Proofs}\label{proofs}

\subsection{Preparatory results}\label{Prep}

Recall that $U$ denotes the renewal function for $(S_n)_{n\in\mn_0}$. According to Lorden's inequality which holds whenever $\me\xi^2<\infty$,
\begin{equation}\label{lord}
U(t)-{\tt m}^{-1}t \leq c_0,\quad t\geq 0,
\end{equation}
where $c_0:=\me \xi^2/{\tt m}^2$ and ${\tt m}=\me\xi<\infty$. See \cite{Carlsson+Nerman:1986} for a nice proof under the assumption that the distribution of $\xi$ is nonlattice. Let $S_0^\ast$ be a random variable with distribution $\mmp\{S_0^\ast\in{\rm d}x\}={\tt m}^{-1}\mmp\{\xi>x\}\1_{(0,\infty)}(x){\rm d}x$. The basic formula (2) of the last cited paper which reads
\begin{equation}\label{eq:S0}
\me U(t-S_0^\ast)={\tt m}^{-1}t,\quad t\geq 0,
\end{equation}
holds true in the lattice case as well. Thus, the argument given in \cite{Carlsson+Nerman:1986} proves \eqref{lord} in general. We also note that
\begin{equation}\label{eq:sec_order_U}
\lim_{t\to\infty}\left(U(t)-{\tt m}^{-1}t\right)=\frac{\me\xi^2}{2{\tt m}^2}
\end{equation}
whenever the distribution of $\xi$ is nonlattice and $\me\xi^2<\infty$.

Since $V(t)\leq U(t)$ for $t\geq 0$ we infer
\begin{equation}\label{lord1}
V(t)-{\tt m}^{-1}t \leq c_0,\quad t\geq 0.
\end{equation}
On the other hand, assuming that $\me\eta<\infty$ (whereas the assumption $\me\xi^2<\infty$ is not needed here),
\begin{eqnarray*}
V(t)-{\tt m}^{-1}t&=&\int_{[0,\,t]}(U(t-y)-{\tt m}^{-1}(t-y)){\rm d}G(y)\\&\hphantom{==}-& {\tt m}^{-1} \int_0^t (1-G(y)){\rm d}y\geq-{\tt m}^{-1}\int_0^t (1-G(y)){\rm d}y\geq
-{\tt m}^{-1}\me\eta
\end{eqnarray*}
having utilized $U(t)\geq {\tt m}^{-1}t$ for $t\geq 0$ which is a consequence of Wald's identity $t\leq \E S_{\nu(t)}={\tt m} U(t)$, where $\nu(t):=\inf\{k\in\mn: S_k>t\}$ for $t\geq 0$. Thus, we have shown that,  under the assumptions $\me\xi^2<\infty$ and $\me\eta<\infty$,
\begin{equation}\label{lord2}
|V(t)-{\tt m}^{-1}t|\leq c_L,\quad t\geq 0
\end{equation}
where $c_L=\max(c_0, {\tt m}^{-1}\me\eta)$.

\subsection{Results on convolution powers of functions of a linear growth and proofs of Theorems \ref{thm:elem1} and \ref{thm:elem2}}

The results presented here are concerned with the following purely analytic problem. Assume that a nondecreasing function $f$ exhibits a linear growth, that is, $f(t)~\sim~at$ as $t\to\infty$ for some $a>0$. Then, for fixed $j\in\mn$,
$$
f^{\ast(j)}(t)~\sim~\frac{a^j t^j}{j!},\quad t\to\infty.
$$
Imposing various assumptions on the behavior of $f(t)-at$ we shall extend this asymptotics to the case when $j=j(t)$ diverges to infinity as $t\to\infty$.

\begin{assertion}\label{prop:convolutions1}
Let $f:\mr\to [0,\,\infty)$ be a nondecreasing right-continuous function vanishing on the negative half-line and satisfying
\begin{equation}\label{eq:asymp_exp_assump}
f(t)=at+O(t^{\alpha}),\quad t\to\infty
\end{equation}
for some $a>0$ and $\alpha\in [0,1)$.
Then, for any integer-valued function $j=j(t)$ such that
$j(t)=o(t^{(1-\alpha)/2})$ as $t\to\infty$,
$$
f_j(t):=f^{\ast(j)}(t)~\sim~\frac{a^j t^j}{j!},\quad t\to\infty.
$$
\end{assertion}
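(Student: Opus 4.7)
Decompose $f(t)=at+r(t)$ on $[0,\infty)$, where by the hypothesis and the boundedness of $f$ on compact sets one can choose $C>0$ so that $|r(t)|\leq C(1+t^\alpha)$ for all $t\geq 0$. Put $g(t):=at\,\mathbbm{1}_{[0,\infty)}(t)$, so that $g^{\ast(j)}(t)=a^jt^j/j!$, and let $\Delta_j(t):=f^{\ast(j)}(t)-a^jt^j/j!$. The goal is to show $\Delta_j(t)=o(a^jt^j/j!)$ under the assumption $j=o(t^{(1-\alpha)/2})$ as $t\to\infty$.

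The algebraic workhorse is the telescoping identity
\begin{equation*}
f^{\ast(j)}-g^{\ast(j)}=\sum_{m=0}^{j-1}f^{\ast(j-m-1)}\ast r\ast g^{\ast(m)},
\end{equation*}
obtained by writing the left-hand side as $\sum_{m=0}^{j-1}\bigl(f^{\ast(j-m)}\ast g^{\ast(m)}-f^{\ast(j-m-1)}\ast g^{\ast(m+1)}\bigr)$ and factoring $f-g=r$. Setting $\rho_m(s):=(r\ast g^{\ast(m)})(s)$, a direct computation using that $g^{\ast(m)}$ has density $a^m s^{m-1}/(m-1)!$ on $(0,\infty)$ gives, for $m\geq 1$,
\begin{equation*}
\rho_m(s)=\frac{a^m}{(m-1)!}\int_0^s(s-y)^{m-1}r(y)\,{\rm d}y,
\end{equation*}
with $\rho_0=r$. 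Combining this with $|r(y)|\leq C(1+y^\alpha)$ and the Beta identity $\int_0^s(s-y)^{m-1}y^\beta\,{\rm d}y=s^{m+\beta}B(m,\beta+1)$ produces $|\rho_m(s)|\leq C'a^m\bigl(s^m/m!+\Gamma(1+\alpha)s^{m+\alpha}/\Gamma(m+1+\alpha)\bigr)$. Given an \emph{a priori} bound $f^{\ast(k)}(t)\leq K a^k t^k/k!$ (see below), each integral $\int_{[0,t]}\rho_m(t-u)\,{\rm d}(f^{\ast(j-m-1)})(u)$ is handled by a further integration by parts plus standard Beta integrals, and summation over $m=0,\ldots,j-1$ together with Stirling in the form $j!/\Gamma(j+\alpha)\sim j^{1-\alpha}$ and $\sum_{k=1}^j\Gamma(k+\alpha)/(k-1)!\sim j^{1+\alpha}/(1+\alpha)$ produces
\begin{equation*}
|\Delta_j(t)|\leq K'\,\frac{j^2}{t^{1-\alpha}}\cdot\frac{a^jt^j}{j!},
\end{equation*}
which is $o(a^jt^j/j!)$ precisely when $j=o(t^{(1-\alpha)/2})$.

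The main obstacle is the \emph{a priori} estimate $f^{\ast(j)}(t)\leq K a^jt^j/j!$ used inside the sum. From $|r(t)|\leq C(1+t^\alpha)$ one obtains, by integration by parts in the Laplace integral, $\hat F(s):=\int_0^\infty e^{-st}\,{\rm d}f(t)=a/s+O(s^{-\alpha})$ as $s\to 0+$, and the Chernoff-type inequality $f^{\ast(j)}(t)\leq\inf_{s>0}e^{st}\hat F(s)^j$ at the saddle-point choice $s=j/t$ delivers, via Stirling, only the weaker bound $f^{\ast(j)}(t)\leq C_1\sqrt{j}\,a^jt^j/j!$. The surplus factor $\sqrt{j}$ is too large to close the estimate on the full range $j=o(t^{(1-\alpha)/2})$, and the cleanest remedy is a short bootstrap: substituting the weaker bound into the telescoping estimate yields $f^{\ast(j)}(t)\sim a^jt^j/j!$ on a smaller subrange, which upgrades the a priori estimate to $f^{\ast(j)}(t)\leq Ka^jt^j/j!$ (without the $\sqrt{j}$) on that subrange, and a second pass of the telescoping estimate extends the asymptotic to the full range.
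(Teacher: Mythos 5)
Your telescoping identity, the explicit formula for $\rho_m=r\ast g^{\ast(m)}$, and the Beta-integral bookkeeping are all sound, and the resulting relative error of order $j^2/t^{1-\alpha}$ is exactly what the paper's argument also produces. The gap is precisely where you locate it, but your remedy does not work. First, the a priori estimate $f^{\ast(k)}(u)\leq K a^k u^k/k!$ is simply false as a uniform statement in $u$: if $f(0)>0$ (which the hypotheses allow), then $f^{\ast(k)}(0)=f(0)^k>0$ while the right-hand side vanishes, and more generally the bound fails for $u\lesssim k$. Since your telescoping estimate integrates $\rho_m(t-u)$ against ${\rm d}f^{\ast(j-m-1)}(u)$ over \emph{all} of $[0,t]$, you need a bound valid on the whole two-parameter region $\{k\leq j(t)-1,\;u\leq t\}$, including small $u$ and large $k$. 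Second, the bootstrap does not close. The first pass (with the Chernoff bound carrying the $\sqrt{j}$ surplus) yields the asymptotics only for $j=o\big(t^{2(1-\alpha)/(5-2\alpha)}\big)$, roughly $j=o(t^{2/5})$ when $\alpha=0$. In the second pass, for $j$ near the top of the target range $o(t^{(1-\alpha)/2})$, the dominant terms of the telescoping sum are those with $k=j-m-1$ close to $j$, and such $k$ exceed $u^{2(1-\alpha)/5}$ even for $u=t$; these pairs $(k,u)$ are outside the range upgraded by the first pass, so for them you are still stuck with the $\sqrt{k}$ bound. Summing, the relative error remains of order $j^{5/2-\alpha}/t^{1-\alpha}$, which diverges for, say, $j\asymp t^{0.45}$, $\alpha=0$. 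Iterating further does not help, because the obstructing terms always involve convolution powers of order comparable to $j$ itself.

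The paper circumvents this by never invoking a clean bound $Ka^kt^k/k!$. It propagates through the one-step recursion $f_j=r_{j-1}+a\int_0^{\cdot}f_{j-1}$ explicit two-sided envelopes $W_j^{\pm}(t)=\frac{a^jt^j}{j!}\pm\sum_{i=0}^{j-1}\binom{j}{i}\frac{a^iC^{j-i}(t+1)^{\alpha(j-i)+i}}{i!}$ (up to taking positive parts), valid for \emph{all} $t\geq 0$ and all $j$ — note the correction sum is large precisely where your $Ka^kt^k/k!$ fails — and only at the very end shows that the correction is $o(a^jt^j/j!)$ under $j=o(t^{(1-\alpha)/2})$. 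To repair your argument you would need an a priori bound of comparable shape, e.g.\ $f^{\ast(k)}(u)\leq K(au+c(u+1)^{\alpha}k)^k/k!$ or the saddle-point bound $\inf_{s>0}e^{su}\hat F(s)^k$ kept in raw form, together with a separate (exponentially negligible) treatment of the region $u\lesssim k$; asserting the clean bound and appealing to a bootstrap leaves the main step unproved.
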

\begin{proof}
According to \eqref{eq:asymp_exp_assump} there exists $C\geq 1$ such that
\begin{equation}\label{eq:uniform_bound_f}
-C(t+1)^{\alpha} \leq f(t)-at \leq C(t+1)^{\alpha},\quad t\geq 0.
\end{equation}
For $j\in\mn$ and $t\geq 0$, put
$$
r_j(t):=\int_{[0,\,t]}f_j(t-y){\rm d}(f(y)-ay)=\int_{[0,\,t]}\left(f(t-y)-a(t-y)\right){\rm d}f_{j}(y)
$$
and note that
$$
f_j(t)=r_{j-1}(t)+a\int_0^t f_{j-1}(y){\rm d}y,\quad j\geq 2,~~ t\geq 0.
$$
By virtue of \eqref{eq:uniform_bound_f}, we conclude that
$$
|r_j(t)|\leq C(t+1)^{\alpha}f_j(t),\quad j\in\mn,~~ t\geq 0.
$$
Using this bound and the mathematical induction we obtain
\begin{equation}\label{eq:ineq}
W_j^{-}(t)\leq f_j(t)\leq W_j^{+}(t),\quad j\in\mn,~~t\geq 0,
\end{equation}
where $W_j^{\pm}$ is defined recursively by $W_0^{\pm}(t):=1$ and
$$
W_j^{\pm}(t)=\left(\pm C(t+1)^{\alpha}W^{\pm}_{j-1}(t)+a\int_{0}^t W_{j-1}^{\pm}(y){\rm d}y\right)_{+},\quad j\in\mn,~~ t\geq 0.
$$
Here, we recall that $x_{+}=\max(x,0)$ and note that taking the nonnegative part is only relevant for $W_j^{-}$ ensuring its nonnegativity, whereas it can be omitted for $W_j^+$.

It remains to show that
\begin{equation}\label{eq:asymp_for_bounds}
W_j^{\pm}(t)~\sim~\frac{a^j t^j}{j!},\quad t\to\infty.
\end{equation}
To this end, we first prove by induction that
\begin{equation}\label{eq:bound_w_j_plus}
W_j^{+}(t)\leq \frac{a^j t^j}{j!}+\sum_{i=0}^{j-1}\binom{j}{i}\frac{a^i C^{j-i}(t+1)^{\alpha(j-i)+i}}{i!},\quad j\in\mn,~~ t\geq 0.
\end{equation}
While for $j=1$ this follows immediately because $W_1^{+}(t)=C(t+1)^{\alpha}+at$,
the induction step works as follows
\begin{align*}
W_{j+1}^+(t)&\leq C(t+1)^{\alpha}\left(\frac{a^j t^j}{j!}+\sum_{i=0}^{j-1}\binom{j}{i}\frac{a^i C^{j-i}(t+1)^{\alpha(j-i)+i}}{i!}\right)\\
&+a\int_0^{t}\left(\frac{a^j y^j}{j!}+\sum_{i=0}^{j-1}\binom{j}{i}\frac{a^i C^{j-i}(y+1)^{\alpha(j-i)+i}}{i!}\right){\rm d}y\\
&\leq \sum_{i=0}^{j}\binom{j}{i}\frac{a^i C^{j+1-i}(t+1)^{\alpha(j+1-i)+i}}{i!}+\frac{a^{j+1}t^{j+1}}{(j+1)!}+\sum_{i=0}^{j-1}\binom{j}{i}\frac{a^{i+1}C^{j-i}}{i!}\frac{(t+1)^{\alpha(j-i)+i+1}}{\alpha(j-i)+i+1}\\
&\leq \frac{a^{j+1}t^{j+1}}{(j+1)!}+\sum_{i=0}^{j}\binom{j}{i}\frac{a^i C^{j+1-i}(t+1)^{\alpha(j+1-i)+i}}{i!}+\sum_{i=0}^{j-1}\binom{j}{i}\frac{a^{i+1}C^{j-i}}{(i+1)!}(t+1)^{\alpha(j-i)+i+1}\\
&= \frac{a^{j+1}t^{j+1}}{(j+1)!}+\sum_{i=0}^{j}\binom{j}{i}\frac{a^i C^{j+1-i}(t+1)^{\alpha(j+1-i)+i}}{i!}+\sum_{i=1}^{j}\binom{j}{i-1}\frac{a^iC^{j+1-i}}{i!}(t+1)^{\alpha(j+1-i)+i}\\
&= \frac{a^{j+1}t^{j+1}}{(j+1)!}+\sum_{i=0}^{j}\binom{j+1}{i}\frac{a^i C^{j+1-i}(t+1)^{\alpha(j+1-i)+i}}{i!}
\end{align*}
having utilized the binomial identity $\binom{j}{i}+\binom{j}{i-1}=\binom{j+1}{i}$ for the last step. Further,
\begin{align*}
&\hspace{-1cm}\frac{j!}{a^j t^j}\sum_{i=0}^{j-1}\binom{j}{i}\frac{a^i C^{j-i}(t+1)^{\alpha(j-i)+i}}{i!}\sim\frac{j!}{a^j (t+1)^j}\sum_{i=0}^{j-1}\binom{j}{i}\frac{a^i C^{j-i}(t+1)^{\alpha(j-i)+i}}{i!}\\
&\leq \sum_{i=0}^{j-1}\left(\frac{j!}{i!}\right)^{2}\left(\frac{C}{a}\right)^{j-i}(t+1)^{(1-\alpha)(i-j)} \leq \sum_{i=0}^{j-1}(j^{j-i})^2(Ca^{-1})^{j-i}(t+1)^{(1-\alpha)(i-j)}\\
&\leq \sum_{i\geq 1} \left(\frac{Ca^{-1} j^2}{(t+1)^{1-\alpha}}\right)^i=\frac{Ca^{-1} j^2}{(t+1)^{1-\alpha}}\left(1-\frac{Ca^{-1} j^2}{(t+1)^{1-\alpha}}\right)^{-1}.
\end{align*}
Thus, in view of the assumption $j(t)=o(t^{(1-\alpha)/2})$ we have
\begin{equation}\label{eq:asymp_for_bounds1}
\limsup_{t\to\infty}\frac{j!}{a^j t^j}W_j^{+}(t)\leq 1.
\end{equation}
To prove that
\begin{equation}\label{eq:bound_w_j_minus}
W_j^{-}(t)\geq \left(\frac{a^j t^j}{j!}-\sum_{i=0}^{j-1}\binom{j}{i}\frac{a^i C^{j-i}(t+1)^{\alpha(j-i)+i}}{i!}\right)_{+},\quad j\in\mn,~~ t\geq 0
\end{equation}
we use a similar reasoning. While \eqref{eq:bound_w_j_minus} is obviously true for $j=1$, we obtain with the help of induction, for $j\geq 2$,
\begin{align*}
&\hspace{-0.3cm}W_{j+1}^-(t)\geq -C(t+1)^{\alpha}W_{j}^{-}(t)+a\int_0^t W_{j}^{-}(y){\rm d}y\geq -C (t+1)^\alpha
W_{j}^{+}(t)+a\int_0^t W_{j}^{-}(y){\rm d}y\\
&\geq -C(t+1)^{\alpha}W_{j}^{+}(t)+a\int_0^t \left(\frac{a^j y^j}{j!}-\sum_{i=0}^{j-1}\binom{j}{i}\frac{a^iC^{j-i}(y+1)^{\alpha(j-i)+i}}{i!}\right){\rm d}y\\
&\geq -C(t+1)^{\alpha}\left(\frac{a^j t^j}{j!}+\sum_{i=0}^{j-1}\binom{j}{i}\frac{a^iC^{j-i}(t+1)^{\alpha(j-i)+i}}{i!}\right)\\
&+a\int_0^t \left(\frac{a^j y^j}{j!}-\sum_{i=0}^{j-1}\binom{j}{i}\frac{a^i C^{j-i}(y+1)^{\alpha(j-i)+i}}{i!}\right){\rm d}y\\
&\geq \frac{a^{j+1}t^{j+1}}{(j+1)!}-\left(\sum_{i=0}^{j}\binom{j}{i}\frac{a^i C^{j+1-i}(t+1)^{\alpha(j+1-i)+i}}{i!}+\sum_{i=0}^{j-1}\binom{j}{i}\frac{a^{i+1}C^{j-i}}{i!}\int_0^t (y+1)^{\alpha(j-i)+i}{\rm d}y\right)\\
&\geq \frac{a^{j+1}t^{j+1}}{(j+1)!}-\sum_{i=0}^{j}\binom{j+1}{i}\frac{a^i C^{j+1-i}(t+1)^{\alpha(j+1-i)+i}}{i!}.
\end{align*}
We have used \eqref{eq:ineq} and \eqref{eq:bound_w_j_plus} for the second and the fourth inequality, respectively. Since $W_{j+1}^-$ is nonnegative we arrive at \eqref{eq:bound_w_j_minus}. Thus,
\begin{equation}\label{eq:asymp_for_bounds2}
\liminf_{t\to\infty}\frac{j!}{a^j t^j}W_j^{-}(t)\geq 1.
\end{equation}
Combining \eqref{eq:asymp_for_bounds1} and \eqref{eq:asymp_for_bounds2} yields \eqref{eq:asymp_for_bounds}, thereby finishing the proof of Proposition \ref{prop:convolutions1}.
\end{proof}

\begin{proof}[Proof of Theorem \ref{thm:elem1}]
Theorem \ref{thm:elem1} is an immediate consequence of Proposition \ref{prop:convolutions1} and formula \eqref{Vsecond} of Lemma \ref{lem:secondorder} given in the appendix.
\end{proof}

The next results provides asymptotics of convolution powers $f^{\ast(j)}$ for $j=j(t)$ which may grow faster then $t^{1/2}$ under the assumption that the function $|f(t)-at|$ has a finite total variation and satisfies an additional integrability assumption. We shall use a convention that, for a function $x:\mr\to\mr$, $x^{\ast(0)}(t)=\1_{[0,\infty)}(t)$, $t\in\mr$. Also, we shall write $\mathcal{V}_I(x)$ for the total variation of $x$ on the (possibly infinite) interval $I$. Finally, if $x$ is a function of a finite total variation on $[a,\,b]$, $-\infty\leq a<b\leq\infty$ and $y$ is a measurable function on $I$, we stipulate that
$$
\int_{[a,\,b]} y(t)|{\rm d}x(t)|=\int_{[a,\,b]} y(t){\rm d}\left(\mathcal{V}_{[a,\,t]}(x)\right),
$$
where the integral on the right-hand side is understood in the Lebesgue--Stieltjes sense.
\begin{assertion}\label{prop:convolutions2}
Let $f:\mr\mapsto [0,\,\infty)$ be a nondecreasing right-continuous function vanishing on the negative half-line. Assume that the function $\varepsilon$ defined by
\begin{equation}\label{eq:asymp_exp_assump2}
\varepsilon(t):=f(t)-at,\quad t\geq 0,
\end{equation}
for some $a>0$, satisfies
\begin{equation}\label{eq:asymp_inegrability1}
\int_{[0,\,\infty)}y|{\rm d}\varepsilon(y)|<\infty.
\end{equation}
Then, for any integer-valued function $j=j(t)$ such that $j(t)=o(t^{2/3})$ as $t\to\infty$, 
\begin{equation}\label{eq:asymp_exp_res2}
f_j(t):=f^{\ast(j)}(t)~\sim~\frac{a^j t^j}{j!}\exp{\left(\frac{\gamma_0 j^2}{a t}\right)},\quad t\to\infty,
\end{equation}
where $\gamma_0:=\int_{[0,\,\infty)}{\rm d}\varepsilon(y)=\lim_{t\to\infty}(f(t)-at)$.
\end{assertion}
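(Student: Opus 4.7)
The plan is to decompose $f = h + \varepsilon$, with $h(t) := at\,\1_{[0,\infty)}(t)$, and expand via the convolution binomial identity
\[
f^{\ast(j)}(t) = \sum_{k=0}^{j}\binom{j}{k}\bigl(h^{\ast(j-k)}\ast \varepsilon^{\ast k}\bigr)(t),\qquad h^{\ast m}(t)=\frac{(at)^m}{m!}.
\]
Taylor-expanding $(t-y)^{j-k}$ inside the kernel of each summand produces a finite sum indexed by $m\in\{0,\ldots,j-k\}$, whose $m=0$ contribution equals $\binom{j}{k}(at)^{j-k}\varepsilon^{\ast k}(t)/(j-k)!$ with $\varepsilon^{\ast k}(t)\to\gamma_0^k$ as $t\to\infty$ by multiplicativity of the total mass of the signed measure $(d\varepsilon)^{\ast k}$. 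The $m\geq 1$ terms are controlled through the majoration $|d\varepsilon^{\ast k}|\leq |d\varepsilon|^{\ast k}$, the bound $y^m\leq y\,t^{m-1}$ on $[0,t]$, and the first-moment identity $\int_0^\infty y\,d|\varepsilon|^{\ast k}(y) = k\,\mathcal{V}_{[0,\infty)}(\varepsilon)^{k-1}\!\int_0^\infty y|d\varepsilon(y)|$, all finite by \eqref{eq:asymp_inegrability1}.

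The core task is to show that the principal sum
\[
M(j,t) := \sum_{k=0}^{j}\binom{j}{k}\frac{(at)^{j-k}}{(j-k)!}\gamma_0^k = \frac{(at)^j}{j!}\sum_{k=0}^{j}\biggl(\prod_{i=0}^{k-1}\!\Bigl(1-\tfrac{i}{j}\Bigr)\biggr)^{\!2}\!\frac{\lambda^k}{k!},\qquad \lambda:=\frac{\gamma_0 j^2}{at},
\]
is asymptotic to $(at)^j e^\lambda/j!$. When $\gamma_0\geq 0$, the elementary sandwich $1-k^2/j \leq \prod_{i=0}^{k-1}(1-i/j)^2 \leq 1$, obtained from $(1-x)^2\geq 1-2x$ and Weierstrass's product inequality, together with the Poisson moment identity $\sum_k k^2\lambda^k/k! = (\lambda+\lambda^2)e^\lambda$, gives
\[
1-\frac{\lambda+\lambda^2}{j}\leq \frac{M(j,t)}{(at)^j e^\lambda/j!}\leq 1,
\]
which is $1+o(1)$ since $j=o(t^{2/3})$ forces $\lambda/j=O(j/t)\to 0$ and $\lambda^2/j = j^3\gamma_0^2/(a^2t^2)\to 0$. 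For general $\gamma_0\in\mathbb{R}$ the naive sandwich is too crude: in the alternating case $\gamma_0<0$ it only delivers an error of the form $e^{2|\lambda|}(|\lambda|+\lambda^2)/j$, which can blow up. One therefore sharpens the expansion via $\prod_{i=0}^{k-1}(1-i/j)^2 = \exp(-k^2/j + O(k^3/j^2))$ and evaluates the Gaussian-tilted exponential sum $\sum_k e^{-k^2/j}\lambda^k/k!$ by a contour/saddle-point argument on its integral representation, yielding $\sim e^\lambda$ uniformly in the sign of $\lambda$ whenever $\lambda^2/j = o(1)$, i.e. $j=o(t^{2/3})$.

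Finally, one verifies that the aggregate remainder from the $m\geq 1$ terms is $o(M(j,t))$. The $m=1$ term dominates and is majorised in modulus by $\sum_k\binom{j}{k}a^{j-k-1}t^{j-k-1}k v_0^{k-1}v_1/(j-k-1)!$ with $v_n := \int_0^\infty y^n|d\varepsilon(y)|$; the same sandwich/saddle-point comparison applied to this $k$-weighted sum (where the extra factor $k$ merely shifts the effective concentration by $O(\lambda)$) gives a bound of relative size $j\lambda v_1/(at\gamma_0) = O(j^3 v_1/(at^2)) = o(1)$, and higher $m$ contributions are absorbed via $y^m\leq y t^{m-1}$. The main obstacle is precisely the $\gamma_0<0$ case in Step 2: because of cancellations in the alternating sum, term-by-term bounds on $|g_k-1|$ are too weak, and the delicate point is to exploit the Gaussian decay $e^{-k^2/j}$ in the effective weight in order to force the exponential sum to behave like $e^\lambda$ independently of the sign of $\lambda$ and uniformly throughout the regime $j=o(t^{2/3})$.
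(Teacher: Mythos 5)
Your decomposition $f=at\,\1_{[0,\infty)}+\varepsilon$ keeps the non\-/integrable constant $\gamma_0$ inside the perturbation, and this is what forces you to resum $M(j,t)=\frac{(at)^j}{j!}\sum_{k}\bigl(\prod_{i=0}^{k-1}(1-i/j)\bigr)^2\lambda^k/k!$ and hence into the alternating\-/sum difficulty when $\gamma_0<0$. That step is not actually carried out: ``evaluates the Gaussian-tilted exponential sum by a contour/saddle-point argument'' is precisely the delicate point you yourself flag, so the proof is incomplete in half the cases. The claim is true and can be pushed through (write $e^{-k^2/j}=\me\, e^{{\rm i}kZ\sqrt{2/j}}$ with $Z$ standard normal, so the sum becomes $e^{\lambda}\,\me\exp\bigl(\lambda(e^{{\rm i}Z\sqrt{2/j}}-1)\bigr)\to e^{\lambda}$ under $\lambda/j\to0$ and $\lambda^2/j\to0$), but you must also truncate at $k=O(\sqrt{j}\,\omega(t))$ because the expansion $\prod_{i<k}(1-i/j)^2=\exp(-k^2/j+O(k^3/j^2))$ fails for $k$ of order $j$, and then bound the truncated alternating tail in absolute value; none of this is in the proposal. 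The paper sidesteps the entire issue by shifting the linear part: $f=(at+\gamma_0)_++\widetilde{\varepsilon}$. Then $\ell^{\ast(j)}(t)=(at+\gamma_0 j)_+^j/j!$ \emph{exactly}, which equals $\frac{(at)^j}{j!}(1+\gamma_0 j/(at))^j\sim\frac{(at)^j}{j!}e^{\gamma_0 j^2/(at)}$ for $j=o(t^{2/3})$ with no resummation and no sign distinction in the main term, while $\widetilde{\varepsilon}$ is now absolutely integrable with finite first absolute moment, so all cross terms are killed by total-variation bounds ($\mathcal{V}_{\mr}(u\ast v)\leq\mathcal{V}_{\mr}(u)\mathcal{V}_{\mr}(v)$). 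Your route is genuinely different and, if completed, would be a legitimate alternative; but as written the $\gamma_0<0$ case rests on an unexecuted argument.

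Two further repairs are needed even where the outline is sound. First, ``higher $m$ contributions are absorbed via $y^m\leq y\,t^{m-1}$'' does not work term by term: summing $\binom{j-k}{m}t^{j-k-m}y^m$ over $m\geq1$ with that bound leaves a factor $2^{j-k}$, which is not negligible relative to $t^{j-k}$ once $j\gg\log t$; you should instead use the mean-value bound $|(t-y)^{j-k}-t^{j-k}|\leq (j-k)\,y\,t^{j-k-1}$ on $[0,t]$, which gives exactly the ``$m=1$ dominates'' estimate you state. Second, $\varepsilon^{\ast k}(t)\to\gamma_0^k$ is invoked for $k$ ranging up to $j(t)\to\infty$, so the pointwise limit is not enough; you need the quantitative bound $|\varepsilon^{\ast k}(t)-\gamma_0^k|\leq t^{-1}\int_{(t,\infty)}y\,{\rm d}|\varepsilon|^{\ast k}(y)\leq t^{-1}k\,\mathcal{V}_{[0,\infty)}(\varepsilon)^{k-1}\!\int_0^\infty y|{\rm d}\varepsilon(y)|$ and must verify it is summable against the weights $\binom{j}{k}(at)^{j-k}/(j-k)!$ uniformly in the regime $j=o(t^{2/3})$ --- this works under \eqref{eq:asymp_inegrability1}, but it has to be checked, and for $\gamma_0<0$ it again feeds into the alternating sum.
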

\begin{proof}
The function $\varepsilon$, as the difference of two nondecreasing functions, has a finite total variation on every finite interval. In particular, \eqref{eq:asymp_inegrability1} entails
$$
\int_{[0,\,\infty)}|{\rm d}\varepsilon(y)|\leq \int_{[0,\,1)}|{\rm d}\varepsilon(y)|+\int_{[1,\,\infty)}y|{\rm d}\varepsilon(y)|<\infty.
$$
Thus, $\varepsilon$ has a finite total variation on $[0,\infty)$. Write
$$
\int_0^{\infty}|\varepsilon(y)-\gamma_0|{\rm d}y=\int_0^{\infty}\left|\int_{(y,\,\infty)}{\rm d}\varepsilon(z)\right|{\rm d}y\leq \int_0^{\infty}\int_{(y,\,\infty)}\left|{\rm d}\varepsilon(z)\right|{\rm d}y=\int_{[0,\,\infty)}y\left|{\rm d}\varepsilon(y)\right|<\infty
$$
having utilized integration by parts for the last equality. Hence, \eqref{eq:asymp_inegrability1} implies that
\begin{equation}\label{eq:asymp_inegrability2}
\int_0^\infty |\varepsilon(y)-\gamma_0|{\rm d}y<\infty.
\end{equation}
Now we modify \eqref{eq:asymp_exp_assump2} in a neighborhood of the origin, so that the
essential properties of $\varepsilon$ given by  \eqref{eq:asymp_inegrability1} and \eqref{eq:asymp_inegrability2} are preserved. Put
\begin{equation}\label{eq:decomposition_modified}
f(t)=(at+\gamma_0)_++\widetilde{\varepsilon}(t)=:\ell(t)+\widetilde{\varepsilon}(t),\quad t\in\mathbb{R}.
\end{equation}
Note that both summands 
can be non-zero in a bounded left neighborhood of the origin, yet
\begin{equation}\label{eq:asymp_inegrability3}
\int_{\mathbb{R}}|\widetilde{\varepsilon}(y)|{\rm d}y<\infty\quad\text{and}\quad \int_{\mathbb{R}}|y| |{\rm d}\widetilde{\varepsilon}(y)|<\infty
\end{equation}
because $t\mapsto \varepsilon(t)-\gamma_0-\widetilde{\varepsilon}(t)$  has a bounded support. The advantage of \eqref{eq:decomposition_modified} is justified by a 
simple formula for the convolution powers of $\ell$, namely,
$$
\ell^{\ast(j)}(t)=\frac{(at+\gamma_0 j)^j_{+}}{j!},\quad j\in\mn,~~ t\in\mathbb{R}.
$$
To check this we use the mathematical induction. While, for $j=1$, the formula is trivial,
the induction step works as follows: for $t\geq -a^{-1}\gamma_0(j+1)$,
\begin{align*}
\ell^{\ast(j+1)}(t)=\int_{\mr}\frac{(a(t-y)+\gamma_0 j)^j_{+}}{j!}{\rm d}\ell(y)=a\int_{-\gamma_0 a^{-1}}^{t+j\gamma_0 a^{-1}}\frac{(a(t-y)+\gamma_0 j)^j}{j!}{\rm d}y\\
=\int_0^{at+\gamma_0(j+1)}\frac{z^j}{j!}{\rm d}z=\frac{(at+\gamma_0(j+1))^{j+1}}{(j+1)!},
\end{align*}
and $\ell^{\ast(j+1)}(t)=0$ for $t<-a^{-1}\gamma_0(j+1)$.

We intend to prove \eqref{eq:asymp_exp_res2}. Using \eqref{eq:decomposition_modified} we obtain
$$
f^{\ast(j)}(t)=\ell^{\ast(j)}(t)+\sum_{k=0}^{j-1}\binom{j}{k}\left(\ell^{\ast(k)}\ast\widetilde{\varepsilon}^{\ast(j-k)}\right)(t),\quad t\in \mr.
$$
We are going to show that the second summand is asymptotically negligible with respect to
$\ell^{\ast(j)}(t)$ whenever $j(t)=o(t^{2/3})$. Assume this has already been done. Then
\eqref{eq:asymp_exp_res2} follows immediately because, for large enough $t$,
$$
f^{\ast(j)}(t)~=~\ell^{\ast(j)}(t)~=~\frac{a^jt^j}{j!}\left(1+\frac{\gamma_0 j}{at}\right)^j~=~\frac{a^jt^j}{j!}\exp{\left(j\log \left(1+\frac{\gamma_0j}{at}\right)\right)}.
$$
The right-hand side is asymptotically equivalent to $\frac{a^jt^j}{j!}\exp{\left(\frac{\gamma_0 j^2}{at}\right)}$ whenever $j=j(t)=o(t^{2/3})$ as $t\to\infty$.

Passing to the analysis of
$$
R_j(t):=\sum_{k=0}^{j-1}\binom{j}{k}\left(\ell^{\ast(k)}\ast\widetilde{\varepsilon}^{\ast(j-k)}\right)(t),\quad t\geq 0
$$
we first check that
\begin{equation}\label{eq:finite_variation}
\mathcal{V}_{\mr}(\ell\ast \widetilde{\varepsilon})\leq \widetilde{C}<\infty
\end{equation}
for an absolute constant $C>0$. For $t\in\mr$,
$$
(\ell\ast\widetilde{\varepsilon})(t)=\int_{\mr}\widetilde{\varepsilon}(t-y){\rm d}\ell(y)=a\int_{-a^{-1}\gamma_0}^\infty \widetilde{\varepsilon}(t-y){\rm d}y=a\int_{-\infty}^{t+a^{-1}\gamma_0}\widetilde{\varepsilon}(y){\rm d}y.
$$
Thus, \eqref{eq:finite_variation} holds with $\widetilde{C}:=a\int_{\mr}|\widetilde{\varepsilon}(y)|{\rm d}y$. Put
$$
g_{i,j}(t):=\mathcal{V}_{(-\infty,\,t]}(\ell^{\ast(i)} \ast \widetilde{\varepsilon}^{\ast(j)}),\quad i,j\in\mn_0,\quad t\in\mr.
$$
Then, for $i,j\in\mn$,
\begin{multline*}
g_{i,j}(t)=\mathcal{V}_{(-\infty,\,t]}\left((\ell^{\ast(i-1)} \ast \widetilde{\varepsilon}^{\ast(j-1)})\ast (\ell\ast \widetilde{\varepsilon})\right)\leq \mathcal{V}_{(-\infty,\,t]}(\ell^{\ast(i-1)} \ast \widetilde{\varepsilon}^{\ast(j-1)})\mathcal{V}_{(-\infty,\,t]} (\ell\ast \widetilde{\varepsilon})\\
\leq \mathcal{V}_{(-\infty,\,t]}(\ell^{\ast(i-1)} \ast \widetilde{\varepsilon}^{\ast(j-1)})\mathcal{V}_{\mr} (\ell\ast \widetilde{\varepsilon})\leq \widetilde{C}g_{i-1,j-1}(t),\quad t\in\mr,
\end{multline*}
where we have used that the total variation of the convolution of two functions is bounded by the product of their total variations, see Theorem 1.3.2(c) in \cite{Rudin:1962}. Iterating this inequality we conclude that
$$
|R_j(t)|\leq \sum_{k=0}^{j-1}\binom{j}{k}g_{k,j-k}(t)\leq \sum_{k\leq j/2}\binom{j}{k}\widetilde{C}^k g_{0,j-2k}(t)+\sum_{j/2<k<j}\binom{j}{k}\widetilde{C}^{j-k} g_{2k-j,0}(t),\quad t\in\mr.
$$
Note that $g_{0,j-2k}(t)\leq \mathcal{V}_{\mr}(\widetilde{\varepsilon}^{\ast(j-2k)})\leq (\mathcal{V}_{\mr}(\widetilde{\varepsilon}))^{j-2k}\leq \widetilde{C}_1^{j-2k}$ for $\widetilde{C}_1:=\int_{\mr}|{\rm d}\widetilde{\varepsilon}(y)|<\infty$. Therefore, 
$$
\sum_{k\leq j/2}\binom{j}{k}\widetilde{C}^k g_{0,j-2k}(t)\leq \sum_{k\leq j/2}\binom{j}{k}\widetilde{C}^k\widetilde{C}^{j-2k}_1\leq (\widetilde{C}\widetilde{C}_1^{-1}+\widetilde{C}_1)^j=o\left(\frac{a^jt^j}{j!}\left(1+\frac{\gamma_0 j}{at}\right)^j\right),\quad t\to\infty,
$$
for $b^j=b^{j(t)}$ grows slower than $\frac{a^jt^j}{j!}\left(1+\frac{\gamma_0 j}{at}\right)^j$ as $t\to\infty$ for an arbitrary finite constant $b>0$. Now we analyze the second sum
\begin{multline*}
\sum_{j/2<k<j}\binom{j}{k}\widetilde{C}^{j-k} g_{2k-j,0}(t)=\sum_{j/2<k<j}\binom{j}{k}\widetilde{C}^{j-k}\mathcal{V}_{(-\infty,\,t]}(\ell^{\ast(2k-j)})=\sum_{j/2<k<j}\binom{j}{k}\widetilde{C}^{j-k}\ell^{\ast(2k-j)}(t)\\
=\sum_{j/2<k<j}\binom{j}{k}\widetilde{C}^{j-k}\frac{(at +\gamma_0(2k-j))^{2k-j}}{(2k-j)!}=\sum_{1\leq k<j/2}\binom{j}{k}\widetilde{C}^{k}\frac{(at +\gamma_0(j-2k))^{j-2k}}{(j-2k)!}.
\end{multline*}
Here, the second equality follows from monotonicity of $\ell$ and the third equality holds for $t$ large enough. It is important for what follows that, for $k<j/2$ and $t>0$,
$$
\frac{(at +\gamma_0(j-2k))^{j-2k}}{(j-2k)!}\leq \frac{a^{j-2k}t^{j-2k}}{(j-2k)!}\exp{\left(\frac{\gamma_0 (j-2k)^2}{at}\right)}.
$$

\noindent
{\sc Case $\gamma_0\geq 0$.} We obtain, for $t>0$,
\begin{align*}
&\hspace{-0.3cm}\sum_{1\leq k<j/2}\binom{j}{k}\widetilde{C}^{k}\frac{(at +\gamma_0(j-2k))^{j-2k}}{(j-2k)!}\leq 
\exp{\left(\frac{\gamma_0 j^2}{at}\right)}\sum_{1\leq k<j/2}\binom{j}{k}\widetilde{C}^{k}\frac{a^{j-2k}t^{j-2k}}{(j-2k)!}\\
&= 
\frac{a^jt^j}{j!}\exp{\left(\frac{\gamma_0 j^2}{at}\right)}\sum_{1\leq k<j/2}\frac{(j!)^2}{(j-k)!(j-2k)!}\frac{1}{k!}\frac{\widetilde{C}^{k}}{a^{2k}t^{2k}}\leq 
\frac{a^jt^j}{j!}\exp{\left(\frac{\gamma_0 j^2}{at}\right)}\sum_{k\geq 1}j^{3k}\frac{1}{k!}\frac{\widetilde{C}^{k}}{a^{2k}t^{2k}}\\
&=\frac{a^jt^j}{j!}\exp{\left(\frac{\gamma_0 j^2}{at}\right)}\left(\exp{\left(\frac{\widetilde{C}j^3}{a^2t^2}\right)}-1\right).
\end{align*}
The last factor converges to zero whenever $j=j(t)=o(t^{2/3})$, whence the claim.

\noindent
{\sc Case $\gamma_0<0$.} Arguing in the same vein it is enough to check that
$$
\sum_{1\leq k<j/2}\frac{1}{k!}\left(\frac{\widetilde{C}j^3}{a^2t^2}\right)^k\exp{\left(\frac{\gamma_0(j-2k)^2}{at}\right)}=o\left(\exp{\left(\frac{\gamma_0j^2}{at}\right)}\right),\quad t\to\infty
$$
which is equivalent to
$$
I_t:=\sum_{1\leq k<j/2}\frac{1}{k!}\left(\frac{\widetilde{C}j^3}{a^2t^2}\right)^k\exp{\left(\frac{4|\gamma_0|k(j-k)}{at}\right)}=o(1),\quad t\to\infty.
$$
Invoking the inequality $\exp{\left(\frac{4|\gamma_0|k(j-k)}{at}\right)}\leq \exp{(4|\gamma_0|a^{-1}k)}$ for $1\leq k<j$ and large enough $t$ we infer $$I_t\leq \sum_{1\leq k<j/2}\frac{1}{k!}\left(\frac{\widetilde{C}j^3\exp(4|\gamma_0|a^{-1})}{a^2t^2}\right)^k\leq\exp\left(\frac{\widetilde{C}j^3}{a^2t^2}\exp{(4|\gamma_0|a^{-1})}\right)-1~\to~0,\quad t\to\infty.$$
The proof of Proposition \ref{prop:convolutions2} is complete.
\end{proof}

\begin{proof}[Proof of Theorem \ref{thm:elem2}]
We intend to apply Proposition \ref{prop:convolutions2}. To this end, it is enough to check that, under the assumptions of Theorem \ref{thm:elem2},
$$\int_{[0,\,\infty)}y|{\rm d}(V(y)-{\tt m}^{-1}y)|<\infty.$$
Recall that $V=U\ast G$ and denote by ${\rm Id}$ the identity function on $[0,\infty)$, that is, ${\rm Id}(t):=t_{+}=t\1_{[0,\infty)}(t)$ for $t\in\mr$. Then
$$
V-{\tt m}^{-1}{\rm Id}=(U-{\tt m}^{-1}{\rm Id})\ast G-{\tt m}^{-1}({\rm Id}\ast (1-G)).
$$
Using this and integration by parts yields
\begin{align*}
&\int_{[0,\,\infty)}y|{\rm d}(V(y)-{\tt m}^{-1}y)|=-\int_{[0,\,\infty)}y{\rm d}\mathcal{V}_{[y,\,\infty)}(V-{\tt m}^{-1}{\rm Id})=\int_{[0,\,\infty)}\mathcal{V}_{[y,\,\infty)}(V-{\tt m}^{-1}{\rm Id}){\rm d}y\\
&\hspace{2cm}\leq \int_{[0,\,\infty)}\mathcal{V}_{[y,\,\infty)}(U-{\tt m}^{-1}{\rm Id}){\rm d}y+{\tt m}^{-1}\int_{[0,\,\infty)}\mathcal{V}_{[y,\,\infty)}({\rm Id}\ast (1-G)){\rm d}y\\
&\hspace{2cm}=\int_{[0,\,\infty)}y|{\rm d}(U(y)-{\tt m}^{-1}y)|+{\tt m}^{-1}\int_0^{\infty}\int_y^{\infty} (1-G(z)){\rm d}z{\rm d}y.
\end{align*}
The first summand is finite by Remark 3.1.7(ii) on p.~121 in \cite{Frenk:1982} and the second is finite in view of
the assumption $\me\eta^2<\infty$.

The explicit form of $\gamma_0$ follows from the decomposition
$$
V(t)-{\tt m}^{-1}t=\int_{[0,\,t]}(U(t-y)-{\tt m}^{-1}(t-y)){\rm d}G(y)-{\tt m}^{-1}\int_{[0,\,t]}y{\rm d}G(y)-{\tt m}^{-1}t(1-G(t)),
$$
in which the first summand converges to $(2{\tt m}^2)^{-1}\me\xi^2$ by the dominated convergence theorem, \eqref{lord} and \eqref{eq:sec_order_U}; the second converges to $-{\tt m}^{-1}\me\eta$ and the third tends to zero as $t\to\infty$.
\end{proof}

Finally, we give a general result on the behavior of $f^{\ast(j)}$ for arbitrary $j=j(t)=o(t)$. Unfortunately, this result can seldom be applied to the counting function $V$ but is of independent interest and has at least two merits. On the one hand, it gives a probabilistic explanation of a rather mysterious appearance of the exponent in \eqref{eq:asymp_exp_res2}. On the other hand, it may be used for guessing the behaviour of $V_j$ for $j=j(t)$ growing at least as fast as $t^{2/3}$.
\begin{assertion}\label{prop:convolutions3}
Let $(\widetilde{S}_j)_{j\in\mn_0}$ be a nondecreasing zero-delayed standard random walk with $K(t):=\mmp\{\widetilde{S}_1\leq t\}$ for $t\in\mr$. Assume that, for some $a>0$,
$$
f(t)=at-\int_0^t(1-K(y)){\rm d}y,\quad t\geq 0.
$$
Then
$$
f^{\ast(j)}(t)=\frac{\me (at-\widetilde{S}_j)_{+}^j}{j!},\quad j\in\mn,~~ t\geq 0.
$$
In particular, if $\me \widetilde{S}_1^2<\infty$ and $j=j(t)=o(t^{2/3})$ as $t\to\infty$, then \eqref{eq:asymp_exp_res2} holds with $\gamma_0=-\me \widetilde{S}_1$.
\end{assertion}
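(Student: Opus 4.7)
The plan has two parts. First, I would establish by induction on $j$ the closed-form representation
$$f^{\ast(j)}(t)=\frac{\E(at-\widetilde{S}_j)_+^j}{j!},\quad j\in\mn,\ t\geq 0,$$
and then I would apply Proposition \ref{prop:convolutions2} to obtain the asymptotic assertion. For the base case $j=1$, I would use the tail formula $\E(at-\widetilde{S}_1)_+=\int_0^\infty\mmp\{at-\widetilde{S}_1>s\}\,{\rm d}s$, substitute $u=at-s$ to convert it to $\int_0^{at}K(u)\,{\rm d}u$, and then rewrite this as $at-\int_0^{at}(1-K(u))\,{\rm d}u$, matching the defining formula for $f(t)$.

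For the inductive step, set $g_j(t):=\E(at-\widetilde{S}_j)_+^j/j!$ and decompose $\widetilde{S}_j=\widetilde{S}_{j-1}+X_j$ with $X_j$ independent of $\widetilde{S}_{j-1}$ and having distribution $K$. Conditioning on $\widetilde{S}_{j-1}$ yields
$$g_j(t)=\frac{1}{j!}\,\E\int_{[0,(at-\widetilde{S}_{j-1})_+]}(at-\widetilde{S}_{j-1}-x)^j\,{\rm d}K(x).$$
Applying the tail-integration identity $\E(r-X)_+^j = j\int_0^r(r-u)^{j-1}K(u)\,{\rm d}u$ (valid for any nonnegative random variable $X$ with law $K$ and any $r\geq 0$) to the inner conditional expectation, then swapping expectation and integration by Fubini, produces
$$g_j(t)=\frac{1}{(j-1)!}\int_0^\infty K(x)\,\E(at-\widetilde{S}_{j-1}-x)_+^{j-1}\,{\rm d}x,$$
which, after recognizing the integrand as $(j-1)!$ times $g_{j-1}$ evaluated at a shifted argument and using $K(x)\,{\rm d}x$ as the density appearing in $f$, equals $(f\ast g_{j-1})(t)$. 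Together with $g_1=f$ and associativity of the Lebesgue--Stieltjes convolution, this yields $g_j=f^{\ast(j)}$.

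For the asymptotic assertion I would invoke Proposition \ref{prop:convolutions2} directly. The remainder $\varepsilon(y):=f(y)-ay=-\int_0^y(1-K(z))\,{\rm d}z$ has total-variation measure $|{\rm d}\varepsilon(y)|=(1-K(y))\,{\rm d}y$, so
$$\int_{[0,\infty)}y\,|{\rm d}\varepsilon(y)|=\int_0^\infty y(1-K(y))\,{\rm d}y=\tfrac{1}{2}\E\widetilde{S}_1^2<\infty$$
under the second-moment assumption, verifying \eqref{eq:asymp_inegrability1}. Computing $\gamma_0=\int_{[0,\infty)}{\rm d}\varepsilon(y)=-\E\widetilde{S}_1$, the conclusion \eqref{eq:asymp_exp_res2} produces the claimed asymptotic. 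The main technical obstacle is the inductive step: one must track the positive-part cutoffs, handle possible atoms of $K$ at zero, and align the density $K(x)\,{\rm d}x$ with the derivative of $f$ so that the recursion $g_j=f\ast g_{j-1}$ emerges cleanly; once that recursion is in place the remainder of the proof is routine.
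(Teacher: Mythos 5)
Your proposal is correct in substance but takes a genuinely different route from the paper on both halves of the statement. For the convolution identity, the paper avoids induction altogether: after normalizing to $a=1$ it observes that $f=K\ast{\rm Id}$ with ${\rm Id}(t)=t\1_{[0,\infty)}(t)$, so that $f^{\ast(j)}=K^{\ast(j)}\ast{\rm Id}^{\ast(j)}$ and the claim drops out of ${\rm Id}^{\ast(j)}(t)=t_{+}^j/j!$. Your induction with conditioning on $\widetilde{S}_{j-1}$ and the tail-integration identity reaches the same recursion $g_j=f\ast g_{j-1}$, only with more bookkeeping. One caveat you flag but should resolve explicitly: the ``alignment'' of $K(x)\,{\rm d}x$ with ${\rm d}f$ is a real issue, since ${\rm d}f(y)=(a-1+K(y))\,{\rm d}y$, and likewise your base case yields $at-\int_0^{at}(1-K(u))\,{\rm d}u$ rather than $at-\int_0^{t}(1-K(y))\,{\rm d}y$; these coincide only for $a=1$. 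The paper handles this by rescaling $K$ so as to reduce to $a=1$ at the outset, and your argument needs the same normalization (the displayed identity is, strictly speaking, only valid in that normalized form).

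For the asymptotic assertion, your computation $\int_{[0,\infty)}y\,|{\rm d}\varepsilon(y)|=\int_0^\infty y(1-K(y))\,{\rm d}y=\tfrac12\E\widetilde{S}_1^2<\infty$ correctly verifies \eqref{eq:asymp_inegrability1}, and the appeal to Proposition \ref{prop:convolutions2} is logically sound and gives the shortest proof. The paper deliberately proceeds otherwise: it establishes $\E(t-\widetilde{S}_j)_{+}^j\sim t^j\exp(\gamma_0 j^2/t)$ directly, by truncating on $\{\widetilde{S}_j\le t/2\}$, sandwiching $j\log(1-\widetilde{S}_j/t)$ between $-j\widetilde{S}_j/t-j\widetilde{S}_j^2/t^2$ and $-j\widetilde{S}_j/t$, and expanding the Laplace transform $\phi(j/t)^j$. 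The stated purpose of the proposition is to give a probabilistic explanation of the exponential factor in \eqref{eq:asymp_exp_res2} that is independent of Proposition \ref{prop:convolutions2}; your derivation proves the statement but forfeits that independent explanation, which is worth acknowledging.
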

\begin{proof}
Replacing $K$ with $t\mapsto K(at)$ we can and do assume that $a=1$, that is, $f(t)=\int_0^t K(y){\rm d}y$ or, in short, $f=K\ast {\rm Id}$. Then
$$
f^{\ast(j)}(t)=\left(({\rm Id})^{\ast(j)}\ast K^{\ast(j)}\right)(t)=\int_{[0,\,t]}\frac{(t-y)^j}{j!}{\rm d}K^{\ast(j)}(y)=\frac{\me (t-\widetilde{S}_j)_{+}^j}{j!},\quad t\geq 0.
$$
If $\me \widetilde{S}_1^2<\infty$, then $j=j(t)=o(t^{2/3})$ as $t\to\infty$ implies that
\begin{equation}\label{eq:derivation_rw_asymptotics}
\me (t-\widetilde{S}_j)_{+}^j~\sim~t^j\exp\left(\frac{\gamma_0j^2}{t}\right),\quad t\to\infty.
\end{equation}
This can be justified as follows. We first note that $\gamma_0<0$. Further, in the decomposition
\begin{equation}\label{eq:decomposition_gen_conv}
\me\left(1-\frac{\widetilde{S}_j}{t}\right)_{+}^j=\me (e^{j\log(1-\widetilde{S}_j/t)}\1_{\{\widetilde{S}_j\leq t/2\}})+\me\left(1-\frac{\widetilde{S}_j}{t}\right)_{+}^j\1_{\{\widetilde{S}_j\in (t/2,\,t)\}}
\end{equation}
the second summand is bounded by $2^{-j}$ and $2^{-j}=o\left(\exp\left(\frac{\gamma_0j^2}{t}\right)\right)$ as $t\to\infty$, for $j^2/t=o(j)$. The first summand in \eqref{eq:decomposition_gen_conv} can be bounded with the help of the inequalities
$$
-x-x^2 \leq \log(1-x)\leq -x,\quad x\in [0,\,1/2]\quad\text{and}\quad 1-x\leq e^{-x},\quad x\in\mathbb{R}.
$$
Indeed, we obtain, for $j\geq 4$,
\begin{multline*}
\me e^{-j\widetilde{S}_j/t}\left(1-\frac{j\widetilde{S}_j^2}{t^2}\right)\leq \me e^{-j\widetilde{S}_j/t}\left(1-\frac{j\widetilde{S}_j^2}{t^2}\right)\1_{\{\widetilde{S}_j\leq t/2\}}\leq \me e^{-j\widetilde{S}_j/t}e^{-j\widetilde{S}_j^2/t^2}\1_{\{\widetilde{S}_j\leq t/2\}}\\
\leq \me (e^{j\log(1-\widetilde{S}_j/t)}\1_{\{\widetilde{S}_j\leq t/2\}})\leq \me e^{-j\widetilde{S}_j/t}\1_{\{\widetilde{S}_j\leq t/2\}}\leq \me e^{-j\widetilde{S}_j/t}.
\end{multline*}
For $\lambda\geq 0$, put $\phi(\lambda):=\me e^{-\lambda\widetilde{S}_1}$. In view of $\me\widetilde{S}_1^2<\infty$ we infer 
$$
\me e^{-j\widetilde{S}_j/t}=\phi^j(j/t)=\left(1+\frac{\gamma_0 j}{t}+O\left(\frac{j^2}{t^2}\right)\right)^j.
$$
The right-hand side is asymptotically equivalent to $\exp(\gamma_0 j^2/t)$ as $t\to\infty$ under the assumption $j=j(t)=o(t^{2/3})$. Finally, the relation 
$$
\me e^{-j\widetilde{S}_j/t}\left(\frac{j\widetilde{S}_j^2}{t^2}\right)=o\left(\me e^{-j\widetilde{S}_j/t}\right),\quad t\to\infty
$$
can be checked using the equality $\me e^{-j\widetilde{S}_j/t}\widetilde{S}_j^2=\frac{\partial^2}{\partial \lambda^2}(\phi^j (\lambda))\Big|_{\lambda=j/t}$ in conjunction with the assumptions $j=j(t)=o(t^{2/3})$ and $\me\widetilde{S}_1^2<\infty$.
\end{proof}

\begin{rem}\label{rem:remove}
In the setting of Proposition \ref{prop:convolutions3}, assume that $\me \widetilde{S}_1^3<\infty$ and $j=j(t)=o(t^{3/4})$ as $t\to\infty$. We state, without going into details (which become rather technical), that
$$\me\big(at-\widetilde{S}_j\big)_+^j~\sim~a^jt^j\exp{\left(\gamma_0 j^2/t+(\gamma_1/2-\gamma_0^2)j^3/t^2\right)},\quad t\to\infty,
$$
where $\gamma_0=-\me \widetilde{S}_1$ and $\gamma_1:=\me \widetilde{S}_1^2$. 
\end{rem}

\subsection{Proof of Theorem \ref{thm:keyren} }

For $t\geq 0$, put $g(t):=\int_{[0,\,t]}f(t-y){\rm d}V(y)$ and $I:={\tt m}^{-1}\int_0^\infty f(y){\rm d}y$. By Lemma \ref{key}(a), given $\varepsilon>0$ there exists $t_0>0$ such that $|g(t)-I|\leq \varepsilon$ whenever $t\geq t_0$. Also, by Lemma \ref{keylight}, $g(t)\leq J$ for some $J>0$ and all $t\geq 0$. Hence, for $t\geq t_0$,
\begin{multline}\label{eq:keyren_eq1}
(f\ast V_j)(t)=(g\ast V_{j-1})(t)=\int_{[0,\,t]}g(t-y){\rm d}V_{j-1}(y)=\int_{[0,\,t-t_0]}g(t-y){\rm d}V_{j-1}(y)\\
+\int_{(t-t_0,\,t]}g(t-y){\rm d}V_{j-1}(y) \leq (I+\varepsilon)V_{j-1}(t)+J (V_{j-1}(t)-V_{j-1}(t-t_0)).
\end{multline}
We claim that
\begin{equation}\label{eq:keyren_increment}
\lim_{t\to\infty}\frac{V_{j(t)-1}(t)-V_{j(t)-1}(t-t_0)}{V_{j(t)-1}(t)}=0.
\end{equation}
Note that \eqref{eq:keyren_increment} is not a direct consequence of the elementary renewal theorem, for the theorem provides the asymptotics of $V_{j(t-t_0)-1}(t-t_0)$ rather than $V_{j(t)-1}(t-t_0)$ which is actually needed for \eqref{eq:keyren_increment}. To prove \eqref{eq:keyren_increment} we write with the help of \eqref{subad}
$$
0\leq V_{j(t)-1}(t)-V_{j(t)-1}(t-t_0)=\int_{[0,\,t]}(V(t-y)-V(t-t_0-y)){\rm d}V_{j(t)-2}(y)\leq U(t_0)V_{j(t)-2}(t),
$$
for all $t\geq 0$. Thus, \eqref{eq:keyren_increment} follows from
$$
\lim_{t\to\infty}\frac{V_{j(t)-2}(t)}{V_{j(t)-1}(t)}=0,
$$
which is a consequence of Theorems \ref{thm:elem1} and \ref{thm:elem2} applied with $j=j(t)-1$ and $j=j(t)-2$.

Combining \eqref{eq:keyren_eq1} and \eqref{eq:keyren_increment} we obtain
$$
\limsup_{t\to\infty}\frac{(f\ast V_j)(t)}{V_{j-1}(t)}\leq I. 
$$
The converse inequality for the limit inferior follows analogously. The remaining statements of Theorem \ref{thm:keyren} are secured by Theorems \ref{thm:elem1} and \ref{thm:elem2}.

\section{Appendix}

In this section we shall prove counterparts for perturbed random walks of some standard renewal-theoretic results. Recall that, under the sole assumption ${\tt m}=\me\xi<\infty$,
$$\lim_{t\to\infty}\frac{U(t)}{t}=\lim_{t\to\infty}\frac{V(t)}{t}=\frac{1}{{\tt m}}.$$ We start by discussing the rate of convergence in both limit relations.
\begin{lemma}\label{lem:secondorder}
Assume that either (i) $\E\xi^r<\infty$ for some $r\in (1,\,2]$, or (ii) $\mmp\{\xi>t\}~\sim~bt^{-r}$ for some $r\in (1,\,2)$ and some $b>0$ as $t\to\infty$. Then
\begin{equation}\label{eq:u_two_terms_exp}
U(t)=\frac{t}{{\tt m}}+O(t^{2-r}),\quad t\to\infty,
\end{equation}
where ${\tt m}=\me\xi<\infty$. In the case (i) when $r\in (1,2)$, the big $O$ can be replaced with a little $o$.

Under the additional assumption $\E(\eta\wedge t)=O(t^{2-r})$ as $t\to\infty$,
\begin{equation}\label{Vsecond}
V(t)=\frac{t}{{\tt m}}+O(t^{2-r}),\quad t\to\infty.
\end{equation}
\end{lemma}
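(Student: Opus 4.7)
The plan is to first establish the rate \eqref{eq:u_two_terms_exp} for the pure renewal function $U$, and then derive \eqref{Vsecond} from it via $V=U\ast G$.

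For \eqref{eq:u_two_terms_exp}: the lower bound $U(t)\geq t/{\tt m}$ (Wald's identity, already noted in Section~\ref{Prep}) reduces the task to an upper bound. For $r=2$ this is exactly Lorden's inequality \eqref{lord}. For $r\in(1,2)$ I would invoke the identity \eqref{eq:S0}, $\me U(t-S_0^\ast)=t/{\tt m}$, to rewrite
\[
0\leq U(t)-\frac{t}{{\tt m}}=\E\bigl[U(t)-U(t-S_0^\ast)\bigr]
\]
(with the convention $U(s)=0$ for $s<0$), and then bound the right-hand side via subadditivity of $U$ (namely $U(t)-U(t-s)\leq U(s)$ for $0\leq s\leq t$) together with the linear growth bound $U(s)\leq C(s+1)$ by $C\,\E(S_0^\ast\wedge t)+C+U(t)\mmp\{S_0^\ast>t\}$. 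Under case (i), a Fubini computation on the identity $\E(S_0^\ast\wedge t)={\tt m}^{-1}\int_0^\infty(y\wedge t)\mmp\{\xi>y\}\,{\rm d}y$ combined with the integrability of $y^{r-1}\mmp\{\xi>y\}$ (which is equivalent to $\E\xi^r<\infty$) yields, via dominated convergence, both $\E(S_0^\ast\wedge t)=o(t^{2-r})$ and $\mmp\{S_0^\ast>t\}=o(t^{1-r})$, producing the little-$o$ improvement. Under case (ii), Karamata's theorem applied to $\int_t^\infty\mmp\{\xi>y\}\,{\rm d}y$ yields $\mmp\{S_0^\ast>t\}\sim b\,{\tt m}^{-1}(r-1)^{-1}t^{1-r}$, hence the same argument gives the claimed $O(t^{2-r})$ rate but not the $o$.

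For \eqref{Vsecond}: starting from $V=U\ast G$ together with the integration-by-parts identity $\int_{[0,t]}y\,{\rm d}G(y)+t(1-G(t))=\E(\eta\wedge t)$, a direct rearrangement gives the clean decomposition
\[
V(t)-\frac{t}{{\tt m}}=\int_{[0,\,t]}\Big(U(t-y)-\frac{t-y}{{\tt m}}\Big)\,{\rm d}G(y)-{\tt m}^{-1}\E(\eta\wedge t).
\]
The first summand is $O(t^{2-r})$ by \eqref{eq:u_two_terms_exp} (since the integrand is uniformly bounded by a constant multiple of $(t+1)^{2-r}$), and the second is $O(t^{2-r})$ by the standing hypothesis on $\eta$.

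The main obstacle is the case (ii) half of \eqref{eq:u_two_terms_exp}: the regularly varying hypothesis is sharp and the right-hand side of the subadditivity bound must be analysed to order $t^{2-r}$, requiring precise Karamata-type asymptotics on the integrated tail of $\xi$ rather than a soft moment argument. The bookkeeping needed to deduce \eqref{Vsecond} from \eqref{eq:u_two_terms_exp} is, by contrast, essentially immediate.
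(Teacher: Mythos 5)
Your proof is correct, and its skeleton coincides with the paper's: Lorden's inequality \eqref{lord} for $r=2$, the Carlsson--Nerman identity \eqref{eq:S0} as the entry point for $r\in(1,2)$, and the very same decomposition $V(t)-{\tt m}^{-1}t=\int_{[0,t]}(U(t-y)-{\tt m}^{-1}(t-y))\,{\rm d}G(y)-{\tt m}^{-1}\E(\eta\wedge t)$ for \eqref{Vsecond}. The genuine difference is in how the quantity $U(t)-{\tt m}^{-1}t=\E\bigl[U(t)-U(t-S_0^\ast)\bigr]$ is estimated for $r\in(1,2)$. The paper writes it as $\int_{[0,t]}\mmp\{S_0^\ast>t-y\}\,{\rm d}U(y)$ and invokes Theorem 4 of \cite{Sgibnev:1981} to get the exact asymptotic equivalence with ${\tt m}^{-1}\int_0^t\mmp\{S_0^\ast>y\}\,{\rm d}y$, which in case (ii) even produces the precise constant $b\,{\tt m}^{-2}(r-1)^{-1}(2-r)^{-1}$ in front of $t^{2-r}$. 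You instead bound the expectation from above via subadditivity and linear growth of $U$, obtaining $C\E(S_0^\ast\wedge t)+C+U(t)\mmp\{S_0^\ast>t\}$, and control each term by a Fubini/dominated-convergence computation on the integrated tail of $\xi$ (resp.\ Karamata in case (ii)). Since Wald's identity supplies the matching lower bound $U(t)\geq{\tt m}^{-1}t$, this one-sided estimate suffices, and it correctly delivers the little-$o$ in case (i) and the big-$O$ in case (ii); what it gives up is only the sharp constant in case (ii), which the lemma does not require. A minor bonus of your route: the paper separately disposes of the sub-case $\E S_0^\ast<\infty$ (equivalently $\E\xi^2<\infty$) within case (i), whereas your bound handles it uniformly. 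The passage from \eqref{eq:u_two_terms_exp} to \eqref{Vsecond} is identical to the paper's.
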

\begin{proof}
First, we focus on \eqref{eq:u_two_terms_exp}.

\noindent
{\sc Case (i)}. If $r=2$, then \eqref{eq:u_two_terms_exp} follows from Lorden's inequality \eqref{lord}. Assume now that $r\in (1,2)$ and note that, for any $p>1$, $\me \xi^p<\infty$ is equivalent to $\me (S_0^\ast)^{p-1}<\infty$. The situation is not excluded that $\me S_0^\ast<\infty$ in which case $\me\xi^2<\infty$, so that $U(t)={\tt m}^{-1}t +O(1)={\tt m}^{-1}t+O(t^{2-r})$ as $t\to\infty$. Thus, in what follows we can and do assume that $\me S_0^\ast=\infty$. Then $$U(t)-{\tt m}^{-1}t=\int_{[0,\,t]}\mmp\{S_0^\ast>t-y\}{\rm d}U(y)~\sim~{\tt m}^{-1}\int_0^t \mmp\{S_0^\ast>y\}{\rm d}y,\quad t\to\infty,$$ where the equality is nothing else but \eqref{eq:S0}, and the asymptotic relation follows from Theorem 4 in \cite{Sgibnev:1981}. Now $\me (S_0^\ast)^{r-1}<\infty$ entails $\mmp\{S_0^\ast>t\}=o(t^{1-r})$, whence $\int_0^t \mmp\{S_0^\ast>y\}{\rm d}y=o(t^{2-r})$ as $t\to\infty$.

\noindent {\sc Case (ii)}. In this case $\mmp\{S_0^\ast>t\}\sim b({\tt m}(r-1))^{-1} t^{-(r-1)}$ as $t\to\infty$. This implies that
\begin{equation*}
U(t)-\frac{t}{\tt m}~\sim~\frac{1}{{\tt m}}\int_0^t \mmp\{S_0^\ast>y\}{\rm d}y~\sim~\frac{b}{{\tt m^2}(r-1)(2-r)}t^{2-r},\quad t\to\infty
\end{equation*}
and thereupon \eqref{eq:u_two_terms_exp}. Under the additional assumption that the distribution of $\xi$ is nonlattice relation \eqref{eq:u_two_terms_exp} also follows from Theorem 2.2 in \cite{Mohan:1976}.

Finally, relation \eqref{Vsecond} follows from the equality (which has already appeared in Section \ref{Prep})
$$V(t)-{\tt m}^{-1}t=\int_{[0,\,t]}(U(t-y)-{\tt m}^{-1}(t-y)){\rm d}G(y)-{\tt m}^{-1}\me (\eta\wedge t)$$
because each summand is $O(t^{2-r})$ by \eqref{eq:u_two_terms_exp} and the assumption of the theorem, respectively.
\end{proof}

We continue by noting that
\begin{equation}\label{subad}
V(x+y)-V(x)\leq U(y),\quad x,y\in\mr.
\end{equation}
Indeed, for $x,y\geq 0$,
\begin{eqnarray}\label{eqV}
V(x+y)-V(x)&=&\me (U(x+y-\eta)-U(x-\eta))\1_{\{\eta\leq x\}}+\me U(x+y-\eta)\1_{\{x<\eta\leq x+y\}}\\&\leq& U(y)(\mmp\{\eta\leq x\}+\mmp\{x<\eta\leq x+y\})\leq U(y)\notag
\end{eqnarray}
having utilized subadditivity and monotonicity of $U$ for the penultimate inequality. If $x,y<0$, then both sides of \eqref{subad} are zero. Finally, we use monotonicity of $V$ to obtain: if $x<0$ and $y\geq 0$, then $V(x+y)-V(x)=V(x+y)\leq V(y)\leq U(y)$; and if $x\geq 0$ and $y<0$, then $V(x+y)-V(x)\leq 0=U(y)$.

Lemmas \ref{bla} and \ref{key} are counterparts of Blackwell's theorem and the key renewal theorem, respectively. Observe that the presence of the $\eta_k$ plays no role, and the results are of the same form as for renewal functions.

\begin{lemma}\label{bla}
Let $h>0$ be any fixed number.

\noindent (a) Assume that the distribution of $\xi$ is nonlattice and ${\tt m}=\me\xi<\infty$. Then $$\lim_{t\to\infty} (V(t+h)-V(t))={\tt m}^{-1}h.$$

\noindent (b) Assume that ${\tt m}=\infty$ (the assumption that the distribution of $\xi$ is nonlattice is not needed). Then
\begin{equation}\label{bla0}
\lim_{t\to\infty} (V(t+h)-V(t))=0.
\end{equation}
\end{lemma}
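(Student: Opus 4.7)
The plan is to reduce Lemma \ref{bla} to the classical Blackwell theorem for the renewal function $U$ by exploiting the convolution identity $V = U \ast G$ from \eqref{equ}. Both parts follow the same dominated-convergence scheme: write the increment $V(t+h) - V(t)$ as an integral over $G$, bound the integrand uniformly in $y$, and pass to the limit inside the integral.

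Starting from \eqref{equ} I would write
$$
V(t+h) - V(t) = \int_{[0,\,\infty)} \bigl(U(t+h-y) - U(t-y)\bigr)\,{\rm d}G(y).
$$
Since $U$ vanishes on $(-\infty,0)$, the integrand is zero for $y>t+h$, and for $t<y\leq t+h$ it equals $U(t+h-y)\leq U(h)$ by monotonicity. For $y\leq t$, the subadditivity of $U$ (a standard consequence of the strong Markov property of $(S_n)_{n\geq 0}$, essentially the $\eta\equiv 0$ case of the estimate \eqref{subad}) gives $U(t+h-y)-U(t-y)\leq U(h)$. Thus the integrand is pointwise dominated by the finite constant $U(h)$, which is integrable with respect to the probability measure $G$. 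In case (a), the classical Blackwell theorem for nonlattice renewal functions yields $U(t+h-y)-U(t-y)\to h/{\tt m}$ as $t\to\infty$ for each fixed $y\geq 0$, and dominated convergence delivers $V(t+h)-V(t)\to h/{\tt m}$. In case (b), the infinite-mean version of Blackwell's theorem gives $U(t+h-y)-U(t-y)\to 0$ as $t\to\infty$ for each fixed $y$, and the same dominated convergence argument with bound $U(h)$ yields \eqref{bla0}.

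The only non-trivial ingredient is the pointwise input in part (b): the finite-mean nonlattice Blackwell theorem is replaced by its infinite-mean counterpart, which holds with no lattice hypothesis (in the lattice case it reduces to the statement that the mass at any single lattice point under $\sum_n \mmp\{S_n\in\cdot\}$ tends to $d/{\tt m}=0$, combined with monotonicity of $U$). This result is recorded, for instance, in \cite{Iksanov:2016}, so the obstacle is one of citation rather than derivation. Everything else is a routine combination of the convolution formula, subadditivity of $U$, and Lebesgue's dominated convergence theorem.
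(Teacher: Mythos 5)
Your proposal is correct and follows essentially the same route as the paper: both reduce the claim to Blackwell's theorem for $U$ via the identity $V=\me U(\cdot-\eta)_+$, dominate the increment $U(t+h-y)-U(t-y)$ by $U(h)$ using subadditivity and monotonicity of $U$, and conclude by dominated convergence (the paper merely makes the pointwise-limit step explicit by splitting the range of $\eta$ at $t-t^{1/2}$, $t$ and $t+h$). Your handling of part (b) -- extending the infinite-mean Blackwell theorem to the lattice case via monotonicity of $U$ -- is also exactly the paper's argument.
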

\begin{proof}
(a) According to Blackwell's theorem,
\begin{equation}\label{blackwell}
\lim_{t\to\infty}(U(t+h)-U(t))={\tt m}^{-1}h.
\end{equation}
In view of \eqref{blackwell}, $\lim_{t\to\infty}(U(t+h-\eta)-U(t-\eta))\1_{\{\eta\leq t-t^{1/2}\}}={\tt m}^{-1}h$ a.s. Recalling \eqref{subad} we infer
$$
\lim_{t\to\infty}\me (U(t+h-\eta)-U(t-\eta))\1_{\{\eta\leq t-t^{1/2}\}}={\tt m}^{-1}h
$$
by Lebesgue's dominated convergence theorem. Another appeal to \eqref{subad} yields $$\me (U(t+h-\eta)-U(t-\eta))\1_{\{t-t^{1/2}<\eta\leq t\}}\leq U(h)\mmp\{t-t^{1/2}<\eta\leq t\},$$ and the right-hand side converges to $0$ as $t\to\infty$. Finally, by monotonicity,
$$\me U(t+h-\eta)\1_{\{t<\eta\leq t+h\}}\leq U(h)\mmp\{t<\eta\leq t+h\},$$ and the right-hand side converges to $0$ as $t\to\infty$. Invoking the first equality in \eqref{eqV} with $x=t$ and $y=h$ completes the proof of part (a).

\noindent (b) If the distribution of $\xi$ is nonlattice, then, by Blackwell's theorem,
\begin{equation}\label{blackwell1}
\lim_{t\to\infty}(U(t+h)-U(t))=0.
\end{equation}
If the distribution of $\xi$ is $d$-lattice, then, by Blackwell's theorem, \eqref{blackwell1} holds for $h=jd$, $j\in\mn$. However, using monotonicity of $U$ we can ensure that \eqref{blackwell1} holds for any fixed $h>0$ in both nonlattice and lattice cases. With this at hand, repeating verbatim the proof of part (a) we arrive at \eqref{bla0}.
\end{proof}

\begin{lemma}\label{key}
Let $f: \mr\to \mr$ be a directly Riemann integrable (dRi) function on $\mr$.

\noindent (a) Assume that ${\tt m}<\infty$ and that the distribution of $\xi$ is nonlattice. Then
$$\lim_{t\to\infty} \int_{[0,\,\infty)} f(t-y){\rm d}V(y)= {\tt m}^{-1} \int_\mr f(y){\rm d}y.$$

\noindent (b) Assume that ${\tt m}=\infty$ (the assumption that the distribution of $\xi$ is nonlattice is not needed). Then
$$\lim_{t\to\infty} \int_{[0,\,\infty)} f(t-y){\rm d}V(y)=0.$$

If $f$ is dRi on $[0,\infty)$ or $(-\infty, 0]$, then the ranges of integration $[0,\,\infty)$ and $\mr$ should be replaced with $[0,\,t]$ and $[0,\infty)$ or $[t,\infty)$ and $(-\infty, 0]$, respectively.
\end{lemma}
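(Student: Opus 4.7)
The plan is to mirror the proof of Lemma \ref{bla} and reduce both assertions to the classical key renewal theorem for the renewal function $U$ via the convolution identity $V=U\ast G$, where $G(y):=\mmp\{\eta\le y\}$. I treat $f$ dRi on $\mr$ in detail; the half-line variants are handled by the same argument with obvious support restrictions described at the end.

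By Fubini's theorem applied to the decomposition ${\rm d}V=({\rm d}U)\ast{\rm d}G$,
\begin{equation*}
\int_{[0,\,\infty)}f(t-y)\,{\rm d}V(y)=\int_{[0,\,\infty)}\!\!\int_{[0,\,\infty)}f(t-u-z)\,{\rm d}U(u)\,{\rm d}G(z)=\me\,J(t-\eta),
\end{equation*}
where $J(s):=\int_{[0,\,\infty)}f(s-u)\,{\rm d}U(u)$ for $s\in\mr$. Under the hypotheses of (a) the classical key renewal theorem gives $J(s)\to{\tt m}^{-1}\int_\mr f(y)\,{\rm d}y$ as $s\to\infty$; under the hypotheses of (b) its infinite-mean counterpart (a consequence of Blackwell's theorem in the form $U(s+h)-U(s)\to 0$ for every $h>0$, which holds regardless of lattice structure) gives $J(s)\to 0$. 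Since $\eta<\infty$ a.s., $t-\eta\to\infty$ almost surely as $t\to\infty$, so in either case $J(t-\eta)$ converges a.s.\ to the required limit.

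It remains to invoke dominated convergence, for which the main technical step is to exhibit a uniform bound on $J$ over all of $\mr$. Fix $h>0$ and set $\bar f_h(n):=\sup_{x\in[nh,\,(n+1)h]}|f(x)|$ for $n\in\mz$. Direct Riemann integrability of $f$ yields $\sum_{n\in\mz}\bar f_h(n)<\infty$. Partitioning the integral defining $J(s)$ according to which strip $s-u$ falls into and using the subadditivity estimate $U(b)-U(a)\le U(b-a)$ for $a\le b$ (with the convention $U(x)=0$ for $x<0$) produces
\begin{equation*}
|J(s)|\le U(h)\sum_{n\in\mz}\bar f_h(n)<\infty
\end{equation*}
uniformly in $s\in\mr$. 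This constant bound trivially dominates $|J(t-\eta)|$, so DCT yields both (a) and (b).

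The half-line variants require no new idea: if $f$ is dRi on $[0,\infty)$ (extended by zero), then $J(s)$ receives contributions only from $u\in[0,s]$ and vanishes for $s<0$; the restriction $y\in[0,t]$ on the outer integral is then automatic because $t-y=s-u\ge 0$. The symmetric argument handles $f$ dRi on $(-\infty,0]$. The main obstacle in the whole proof is the uniform bound on $J$; once this is in place, the Fubini reduction plus DCT essentially immediately gives the conclusion.
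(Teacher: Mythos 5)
Your proof is correct, but it takes a genuinely different route from the paper's. The paper first establishes a Blackwell-type theorem directly for $V$ (Lemma \ref{bla}, via the representation $V(t)=\me U(t-\eta)_+$ and dominated convergence) and then reruns the classical three-step Riemann-approximation proof of the key renewal theorem with $V$ in place of $U$, using the subadditivity bound $V(x+y)-V(x)\leq U(y)$ to justify the interchange of limits. You instead push the reduction one level higher: writing ${\rm d}V={\rm d}U\ast{\rm d}G$ and applying Fubini, you express the integral as $\me\,J(t-\eta)$ with $J(s)=\int_{[0,\infty)}f(s-u)\,{\rm d}U(u)$, invoke the key renewal theorem for $U$, and finish with a uniform bound on $J$ (your bound is exactly the $U$-analogue of the paper's Lemma \ref{keylight}) plus dominated convergence. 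Your route is shorter and cleanly isolates where the perturbation $\eta$ enters, at the cost of needing the \emph{two-sided} key renewal theorem for $U$ (i.e.\ for $f$ dRi on all of $\mr$, so that the contribution from $u>s$, where $f$ is evaluated at negative arguments, converges to ${\tt m}^{-1}\int_{-\infty}^0 f$); this version is true and standard but is precisely the part the paper elects to prove by hand (Steps 1--3 for the integral over $(t,\infty)$), so you are implicitly relying on a statement slightly stronger than the most commonly cited form of the classical theorem. The paper's approach, by contrast, yields Blackwell's theorem for $V$ as a free-standing intermediate result. Both arguments are sound, and your handling of part (b) (Blackwell with zero limit in both lattice and nonlattice cases) and of the half-line variants matches the paper's.
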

\begin{proof}
(a) We only prove the claim under the assumption that $f$ is dRi on $\mr$ which is equivalent to the fact that $f_{+}$ and $f_{-}$ (nonnegative and nonpositive parts of $f$) are dRi on $\mr$. Thus, we can and do assume that $f\geq 0$ on $\mr$. Obviously, it is enough to show that $$\lim_{t\to\infty} \int_{[0,\,t]} f(t-y){\rm d}V(y)= {\tt m}^{-1} \int_0^\infty f(y){\rm d}y$$ and that $$\lim_{t\to\infty} \int_{(t,\,\infty)} f(t-y){\rm d}V(y)= {\tt m}^{-1} \int_{-\infty}^0 f(y){\rm d}y.$$ The proof of the first relation with $U$ replacing $V$ can be found on p.~241--242 in \cite{Resnick:2002}. We only check the second limit relation by following closely aforementioned Resnick's proof.

We proceed via three steps complicating successively the structure of $f$.

\noindent {\sc Step 1}. Suppose first that
$$f(t)=\1_{[(n-1)h,\,nh)}(t),\quad t<0$$ for fixed nonpositive integer $n$ and $h>0$. Then $f(t-y)=1$ if, and only if, $y\in
(t-nh,\,t-(n-1)h]$ which entails $$\int_{(t,\,\infty)}f(t-y){\rm d}V(y)=V(t-(n-1)h)-V(t-nh).$$ By Lemma \ref{bla}(a), the last
difference tends to ${\tt m}^{-1}h$ as $t\to\infty$, thereby proving that $$\lim_{t\to\infty} \int_{(t,\,\infty)}f(t-y){\rm d}V(y)={\tt m}^{-1}h={\tt m}^{-1}\int_{-\infty}^0 f(y){\rm d}y.$$

\noindent {\sc Step 2}. Suppose now that $$f(t)=\sum_{n\leq 0} c_n\1_{[(n-1)h,\,nh)}(t),\quad t<0,$$ where
$\big(c_n\big)_{n\leq 0}$ is a sequence of nonnegative numbers satisfying $\sum_{n\leq 0}c_n<\infty$. An argument similar to that used in the previous step enables us to assert that $$\int_{(t,\,\infty)}f(t-y){\rm d}V(y)=\sum_{n\leq 0}c_n\big(V(t-(n-1)h)-V(t-nh)\big).$$ Using Lemma \ref{bla}(a) in combination with \eqref{subad} we infer with the help of Lebesgue's dominated convergence theorem, $$\lim_{t\to\infty} \int_{(t,\,\infty)} f(t-y){\rm d}V(y)={\tt m}^{-1}h \sum_{n\leq 0} c_n={\tt m}^{-1}\int_{-\infty}^ 0 f(y){\rm d}y.$$

\noindent {\sc Step 3}. Let now $f$ be an arbitrary nonnegative dRi function on $\mr$ (actually, for the present proof it is enough it is dRi on $(-\infty, 0)$). For each $h>0$, put $$\overline{f}_h(t):=\sum_{n\leq 0}\underset{(n-1)h\leq
y<nh}{\sup}\,f(y)\1_{[(n-1)h,\,nh)}(t),\quad t<0$$ and $$\underline{f}_h(t):=\sum_{n\leq 0}\underset{(n-1)h\leq y<nh}{\inf}\,f(y)\1_{[(n-1)h,\,nh)}(t), \ \ t<0.$$ By the definition of direct Riemann integrability, $$\sum_{n\leq 0}\underset{(n-1)h\leq y<nh}{\sup}\,f(y)<\infty \quad \text{and}\quad \sum_{n\leq 0}\underset{(n-1)h\leq y<nh}{\inf}\,f(y)<\infty$$ for each $h>0$. Thus, the functions $\overline{f}_h$ and $\overline{f}_h$ have the same structure as the functions discussed in Step 2. According to the result of Step 2,
$$\lim_{t\to\infty} \int_{(t,\,\infty)}\overline{f}_h(t-y){\rm d}V(y)={\tt m}^{-1}h\sum_{n\leq 0}\underset{(n-1)h\leq y<nh}{\sup}\,f(y)=:{\tt m}^{-1}\overline{\sigma}(h)$$ and $$\lim_{t\to\infty} \int_{(t,\,\infty)}\underline{f}_h(t-y){\rm d}V(y)={\tt m}^{-1}h\sum_{n\leq 0}\underset{(n-1)h\leq y<nh}{\inf}\,f(y)=:\mu^{-1}\underline{\sigma}(h)$$ for all $h>0$.
Since, for each $h>0$, $$\underline{f}_h(t)\leq f(t)\leq \overline{f}_h(t),\quad t<0,$$ it follows that
\begin{eqnarray*}
{\tt m}^{-1}\underline{\sigma}(h)&=&\underset{t\to\infty}{\lim\inf}\,\int_{(t,\,\infty)}\underline{f}_h(t-y){\rm d}V(y)\leq \underset{t\to\infty}{\lim\inf}\,\int_{(t,\,\infty)} f(t-y){\rm
d}V(y)\\&\leq& \underset{t\to\infty}{\lim\sup}\,\int_{(t,\,\infty)}f(t-y){\rm d}V(y)\leq \underset{t\to\infty}{\lim\sup}\,\int_{(t,\,\infty)}\overline{f}_h(t-y){\rm d}V(y)\\&=&{\tt m}^{-1}\overline{\sigma}(h).
\end{eqnarray*}
We have $\lim_{h\to 0+}\,\big(\overline{\sigma}(h)-\underline{\sigma}(h)\big)=0$ by the definition of direct Riemann integrability. Also, it is known that
$\lim_{h\to 0+}\, \overline{\sigma}(h)=\int_{-\infty}^0 f(y){\rm d}y$. Letting $h\to 0+$ in the last chain of inequalities completes the proof of part (a).

\noindent (b) Use part (b) of Lemma \ref{bla} in place of part (a) and proceed as above.
\end{proof}

Sometimes it is the case that the precision of Lemma \ref{key} is not needed. In this situation the following `light' version, borrowed from Lemma 9.1 in \cite{Iksanov+Marynych+Samoilenko:2020}, may suffice.
\begin{lemma}\label{keylight}
Let $f: \mr\to [0,\infty)$ be a dRi function on $\mr$. Then for some $r>0$ and all $x\in\mr$
\begin{equation}\label{bound}
\int_{[0,\,\infty)} f(x-y){\rm d}V(y)\leq r.
\end{equation}

If $f$ is dRi on $[0,\infty)$ or $(-\infty, 0]$, then the range of integration $[0,\,\infty)$ should be replaced with $[0,\,x]$ or $[x,\infty)$ and then \eqref{bound} holds for all $x\geq 0$ or all $x\leq 0$, respectively.
\end{lemma}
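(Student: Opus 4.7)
The plan is to dominate $f$ by a step function with summable coefficients (which is possible since $f$ is dRi) and then use the uniform increment bound \eqref{subad} on $V$.

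Concretely, I fix any $h>0$ and define, for $t\in\mr$,
\[
\overline{f}_h(t):=\sum_{n\in\mz} c_n \1_{[(n-1)h,\,nh)}(t),\qquad c_n:=\sup_{(n-1)h\leq y<nh}f(y).
\]
Because $f$ is dRi on $\mr$, the series $\sum_{n\in\mz} c_n$ converges, and $f\leq \overline{f}_h$ pointwise. Since $V$ is nondecreasing,
\[
\int_{[0,\infty)}f(x-y)\,{\rm d}V(y)\leq \int_{[0,\infty)}\overline{f}_h(x-y)\,{\rm d}V(y)=\sum_{n\in\mz}c_n\bigl(V(x-(n-1)h)-V(x-nh)\bigr),
\]
where all summands are nonnegative and only finitely many are nonzero (those with $n\leq \lfloor x/h\rfloor+1$). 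Applying the uniform increment bound \eqref{subad}, namely $V(u+h)-V(u)\leq U(h)$ for all $u\in\mr$, to each term yields
\[
\int_{[0,\infty)}f(x-y)\,{\rm d}V(y)\leq U(h)\sum_{n\in\mz}c_n=:r,
\]
and the right-hand side is finite and independent of $x$, which is the desired conclusion.

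For the variants, the argument is identical modulo the range of summation: if $f$ is dRi on $[0,\infty)$, only indices $n\geq 1$ contribute to $\overline{f}_h$, and for $x\geq 0$ the integral over $[0,x]$ is bounded by $U(h)\sum_{n\geq 1}c_n$; if $f$ is dRi on $(-\infty,0]$, only indices $n\leq 0$ contribute, and for $x\leq 0$ the integral over $[x,\infty)$ is bounded by $U(h)\sum_{n\leq 0}c_n$.

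There is no real obstacle here: the whole argument rests on two ingredients already assembled in the paper, the summability built into the definition of direct Riemann integrability and the \emph{uniform} (in $x$) increment bound \eqref{subad}. The only mild care point is checking that, even though the step majorant $\overline{f}_h$ may be supported on a two-sided lattice, the contributions from indices $n$ with $x-nh<0$ are harmless because the corresponding increments of $V$ are still dominated by $U(h)$ (and in fact eventually vanish for $n$ large enough), so the final bound $U(h)\sum_n c_n$ remains independent of $x$.
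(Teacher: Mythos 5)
Your proof is correct. Note that the paper itself does not prove Lemma \ref{keylight} but imports it from Lemma 9.1 of \cite{Iksanov+Marynych+Samoilenko:2020}, so there is no in-text argument to compare against; your route --- majorize $f$ by the upper step function $\overline{f}_h$ with summable coefficients (built into the definition of direct Riemann integrability) and bound each increment $V(x-(n-1)h)-V(x-nh)$ uniformly by $U(h)$ via \eqref{subad} --- is the standard one and is exactly the machinery the paper deploys in Steps 2--3 of its proof of Lemma \ref{key}, minus the limiting step in $t$ and $h$ that is not needed for a mere uniform bound. The only point worth stating explicitly is that the conventions $V(t)=0$ for $t<0$ make \eqref{subad} valid for all real arguments, so the terms with $x-nh<0$ are indeed controlled (and vanish once $x-(n-1)h<0$), as you observe.
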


\vspace{5mm}

\noindent {\bf Acknowledgement}. The present work was supported by National Research Foundation of Ukraine (project 2020.02/0014 ``Asymptotic regimes of perturbed random walks: on the edge of modern and classical probability'').

\end{document}